\documentclass[english,11pt]{article}
\usepackage[T1]{fontenc}
\usepackage{babel}
\usepackage{amsmath,amsfonts,amsthm}
\usepackage{color}
\usepackage{pdfsync}
\usepackage{graphicx}
\usepackage{url}
\usepackage[noinfoline]{imsart}


\setlength{\oddsidemargin}{5mm} \setlength{\evensidemargin}{5mm}
\setlength{\textwidth}{150mm} \setlength{\headheight}{0mm}
\setlength{\headsep}{12mm} \setlength{\topmargin}{0mm}
\setlength{\textheight}{220mm} \setcounter{secnumdepth}{2}


\DeclareMathSymbol{\leqslant}{\mathalpha}{AMSa}{"36} 
\DeclareMathSymbol{\geqslant}{\mathalpha}{AMSa}{"3E} 
\renewcommand{\leq}{\;\leqslant\;}                   
\renewcommand{\geq}{\;\geqslant\;}                   


\newtheorem{Th}{Theorem}
\newtheorem{Le}[Th]{Lemma}
\newtheorem{Pro}[Th]{Proposition}



\newcommand{\bbN}{{\ensuremath{\mathbb N}} }

\newcommand{\bbP}{{\ensuremath{\mathbb P}} }

\newcommand{\bbR}{{\ensuremath{\mathbb R}} }

\newcommand{\N}{\bbN}

\newcommand{\R}{\bbR}


\begin{document}
\begin{frontmatter}
\title{Chaotic extensions and the lent particle method for Brownian motion }
\date{}
\runtitle{}
\author{\fnms{Nicolas}
 \snm{BOULEAU}\corref{}\ead[label=e2]{bouleau@enpc.fr}}
\address{Ecole des Ponts, Paris Tech, Paris-Est \\
6 Avenue Blaise Pascal\\77455 Marne-La-Vallée Cedex 2- FRANCE \\ \printead{e2}}

\author{\fnms{Laurent}
 \snm{DENIS}\corref{}\ead[label=e1]{ldenis@univ-evry.fr}}
\thankstext{T1}{The work of the second author is supported by the chair \textit{risque de cr\'edit}, F\'ed\'eration bancaire Fran\c{c}aise}
\address{D\'epartement de Math\'ematiques\\ Laboratoire  Analyse et Probabilit\'es\\Universit\'e
d'Evry-Val-d'Essonne\\23 Boulevard de France\\91037 EVRY Cedex
-FRANCE\\\printead{e1}}

\runauthor{N. Bouleau and L. Denis}

\begin{abstract}
In previous works, we have developed a new Malliavin calculus on the Poisson space based on {\it the lent particle formula}. The aim of this work is to prove that, on the Wiener space for the standard Ornstein-Uhlenbeck structure, we also have such a formula which permits to calculate easily and intuitively the Malliavin derivative of a functional. Our approach uses   chaos extensions associated to stationary processes of rotations of normal martingales.
\end{abstract}

\begin{keyword}[class=AMS]
\kwd[Primary ]{60H07; 60H15; 60G44; 60G51}

\end{keyword}

\begin{keyword}
\kwd{Malliavin calculus, chaotic extensions, normal martingales}
\end{keyword}
\end{frontmatter}

\section{ Introduction}

When a measurable space with a $\sigma$-finite measure $\nu$ is equipped on  $L^2(\nu)$ with a local Dirichlet form with carr\'e du champ
 $\gamma$, the associated Poisson space, i.e. the probability space of a random Poisson measure with intensity $\nu$, may itself be endowed with a local Dirichlet structure with carr\'e du champ $\Gamma$  (cf. \cite{surgailis}, \cite{wu}). If a gradient  $\flat$ has been chosen associated with the operator 
$\gamma$, a gradient $\sharp$ associated with   $\Gamma$ may be constructed on the  Poisson space (cf \cite{akr},\cite{ma-rockner2},\cite{privault},\cite{bouleau-denis}) and we have shown  \cite{bouleau-denis},\cite{bouleau-denis2}, that such a gradient is provided by the lent particle formula which amounts to add a point to the configuration, to derive with respect to this point, and then to take back the point before integrating with respect to a random Poisson measure variant of the initial one. 

On the example of a L\'evy process, in order to find the gradient of the functional  $ V=\int_0^t\varphi(Y_{u-})dY_u$, this method consists in adding a jump to the process  $Y$ at time $s$ and then deriving with respect to the size of this jump. 

If we think the Brownian motion as a L\'evy process, this addresses naturally the question of knowing whether  to obtain the Malliavin derivative of a Wiener functional we could add a jump to the Brownian path and derive with respect to the size of the jump, in other words whether we have, denoting $D_sF$ the Malliavin derivative of $F$
\begin{equation}\label{un}D_sF=\lim_{a\rightarrow0}\frac{1}{a}(F(\omega+a1_{\{.\geq s\}})-F(\omega)).\end{equation}
Formula (\ref{un}) is satisfied in the case
$F=\Phi(\int_0^1 h_1dB, \ldots, \int_0^1 h_n dB)$ with $\Phi$ regular and $h_i$ continuous. But this formula has no sense in general, since the mapping $t\mapsto1_{\{t\geq s\}}$ does not belong to the  Cameron-Martin space.

We tackle this question by means of the  {\it chaotic extension} of a Wiener functional to a normal martingale weighted combination of a Brownian motion and a Poisson process, and we show that the gradient and its domain are characterized in terms of derivative of a second order stationary process.

We show that a formula similar to  (\ref{un}) is valid and yields the gradient if  $F$ belongs to the domain of the Ornstein-Uhlenbeck Dirichlet form, but whose meaning and justification involve chaotic decompositions. This gives rise to a concrete calculus allowing   $\mathcal{C}^1$ changes of variables.\\
Let us also mention the works of B. Dupire (\cite{dupire}), R. Cont and D.A. Fournié (\cite{cont}) which use an idea somewhat similar but in a completely different mathematical approach and context.

\section{The second order stationary process of rotations of normal martingales.}\label{Section1}
Let $B$ be a standard one-dimensional Brownian motion defined on the Wiener space $\Omega_1$ under the Wiener measure $\mathbb{P}_1$.\\
In this section, we consider $\tilde{N}$ a standard compensated Poisson process independent of  $B$. We denote by $\mathbb{P}_2$ the law of the  Poisson process $N$ and $\mathbb{P}=\mathbb{P}_1\times\mathbb{P}_2$. \\
Let us point out that in the next sections, we shall replace $\tilde{N}$ by any normal martingale.
\subsection{The chaotic extension}\label{ChaoticExtension}
For real $\theta$, let us consider the normal martingale
$$X^\theta_t=B_t\cos\theta+\tilde{N}sin\theta .$$ 

If $f_n$ is a symmetric function of  $L^2(\mathbb{R}^n,\lambda_n)$, we denote $I_n(f_n)$ the Brownian multiple stochastic integral  and $I_n^\theta(f_n)$ the multiple stochastic integral with respect to  $X^\theta$. We have classically cf \cite{dellacherie}
$$\|I_n(f_n)\|^2_{L^2(\mathbb{P}_1)}=\|I^\theta_n(f_n)\|^2_{L^2(\mathbb{P})}=n!\|f_n\|^2_{L^2(\lambda_n)}.$$
It follows that if $F\in L^2(\mathbb{P}_1)$ has the expansion on the Wiener chaos $$F=\sum_{n=0}^\infty I_n(f_n)$$ the same sequence $f_n$ defines {\it a chaotic extension} of $F$ : $F^\theta=\sum_{n=0}^\infty I_n^\theta(f_n)$. \\

\noindent{\bf Remark 1.} Let us emphasize that the {\it chaotic extension}  $F\mapsto F^\theta$  is not compatible with composition of applications : $\Phi\circ\Psi^\theta\neq(\Phi\circ\Psi)^\theta$ except obvious cases as seen by taking  $\Phi(x)=x^2$, $\Psi=I_1(f)$ and $\theta=\pi/2$. Thus it is important that the sequence  $(f_n)_n$ appears in the notation : we will use the "short notation" of  \cite{dellacherie}.\\

We denote $\mathcal{P}$ (resp. $\mathcal{P}(t)$) the set of finite subsets of  $]0,\infty[$ (resp. $]0,t]$).  We write $A=\{s_1<\cdots<s_n\}$ for the current element of  $\mathcal{P}$ and $dA$ for the measure whose restriction to each simplex is the Lebesgue measure, cf \cite{dellacherie} p201 {\it et seq.}

If $F\in L^2(\mathbb{P}_1)$ expans in $F=\sum_{n=0}^\infty I_n(f_n)$ we denote $f\in L^2(\mathcal{P})$ the sequence $f=(n!f_n)_{n\in\mathbb{N}}$ and
$$\begin{array}{rl}
I^\theta(f)&=\int_\mathcal{P}f(A)dX^\theta_A\\
&\\
&=f(\emptyset)+\sum_{n>0}\int_{s_1<\cdots<s_n}f(s_1,\ldots,s_n)dX^\theta_{s_1}\cdots dX^\theta_{s_n}.
\end{array}$$ Thus we have $f(\emptyset)=\mathbb{E}[F]$, $F=I^0(f)$ and $I^{\pi/2}(f)=\int_\mathcal{P}f(A)d\tilde{N}_A=\sum_nI^{\pi/2}_n(f_n)$.

In all the paper we confond the stochastic integrals $\int H_{s-}dX^\theta_s$ and $\int H_{s}dX^\theta_s$ thanks to the fact that  $X^\theta$ is normal.
\begin{Pro} Let be $f$ and $g\in L^2(\mathcal{P})$, and $h=f+ig\in L^2_\mathbb{C}(\mathcal{P})$. The random variable  $H^\theta=\int_\mathcal{P}h(A)dX^\theta_A$ defines a second order stationary process  continuous in $L^2_\mathbb{C}(\mathbb{P})$.
\end{Pro}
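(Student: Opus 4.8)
The plan is to read "second order stationary" in the parameter $\theta$ as the requirements that $\mathbb{E}[H^\theta]$ be independent of $\theta$ and that the covariance $\mathbb{E}[H^\theta\overline{H^{\theta'}}]$ depend on $(\theta,\theta')$ only through $\theta-\theta'$, and then to deduce the $L^2_\mathbb{C}(\mathbb{P})$-continuity from the continuity of this covariance function. Everything reduces to a single computation: the cross-covariance of two multiple integrals taken with respect to the two rotated martingales $X^\theta$ and $X^{\theta'}$. First I would expand $H^\theta=\sum_{n\ge0}J^\theta_n$, where $J^\theta_n=\int_{s_1<\cdots<s_n}h(s_1,\ldots,s_n)\,dX^\theta_{s_1}\cdots dX^\theta_{s_n}$ is the iterated integral over the simplex of the restriction of $h$ to subsets of cardinality $n$; all terms with $n\ge1$ have zero mean, so $\mathbb{E}[H^\theta]=h(\emptyset)$ for every $\theta$, which settles the mean.

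The crucial input is the joint bracket of the two rotated martingales. Since $B$ and $\tilde N$ are independent normal martingales, $d\langle B\rangle_t=d\langle\tilde N\rangle_t=dt$ and $\langle B,\tilde N\rangle=0$, whence
\begin{equation*}
d\langle X^\theta,X^{\theta'}\rangle_t=(\cos\theta\cos\theta'+\sin\theta\sin\theta')\,dt=\cos(\theta-\theta')\,dt.
\end{equation*}
I would then propagate this factor through the iterated integrals: applying the covariation identity $\langle\int H\,dX^\theta,\int K\,dX^{\theta'}\rangle=\int HK\,d\langle X^\theta,X^{\theta'}\rangle$ to the outermost integration and inducting on the number of levels, each level of the simplex contributes one factor $\cos(\theta-\theta')$, while integrals of different orders are orthogonal (the base of the induction being $\mathbb{E}[J^{\theta'}_m]=0$ for $m\ge1$). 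This yields
\begin{equation*}
\mathbb{E}\bigl[J^\theta_n\,\overline{J^{\theta'}_m}\bigr]=\delta_{nm}\,\cos^n(\theta-\theta')\int_{\{|A|=n\}}|h(A)|^2\,dA,
\end{equation*}
and therefore, with $c_n=\int_{\{|A|=n\}}|h(A)|^2\,dA\ge0$,
\begin{equation*}
\mathbb{E}\bigl[H^\theta\,\overline{H^{\theta'}}\bigr]=\sum_{n\ge0}c_n\,\cos^n(\theta-\theta')=:\Phi(\theta-\theta').
\end{equation*}

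This already gives stationarity, since $\Phi$ depends on $\theta,\theta'$ only through $\theta-\theta'$ and is real and even. For the continuity I would observe that $\sum_n c_n=\|h\|^2_{L^2_\mathbb{C}(\mathcal{P})}<\infty$, so by the Weierstrass $M$-test the series defining $\Phi$ converges uniformly and $\Phi$ is continuous on $\mathbb{R}$ (with $\Phi(0)=\|h\|^2$, which also re-proves finiteness of the second moment). Then
\begin{equation*}
\|H^\theta-H^{\theta'}\|^2_{L^2_\mathbb{C}(\mathbb{P})}=2\bigl(\Phi(0)-\Phi(\theta-\theta')\bigr)\xrightarrow[\theta\to\theta']{}0,
\end{equation*}
which is exactly the asserted $L^2_\mathbb{C}(\mathbb{P})$-continuity of $\theta\mapsto H^\theta$.

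The main obstacle is the middle step: justifying rigorously that the cross-covariance of iterated integrals against two \emph{different} martingales factorizes level by level with the constant $\cos(\theta-\theta')$, including the vanishing for unequal orders. I expect to handle this by the induction sketched above, the only care being the applicability of the covariation formula at each level (the integrands are lower-order iterated integrals, predictable and square integrable for the common filtration of $(B,\tilde N)$, and both $X^\theta$ and $X^{\theta'}$ are $L^2$-martingales), together with a dominated-convergence argument permitting the interchange of the expectation with the infinite chaotic sum.
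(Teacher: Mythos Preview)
Your proposal is correct and follows essentially the same route as the paper: both hinge on the bracket identity $\langle X^{\theta+\varphi},X^\theta\rangle_t=t\cos\varphi$ and its propagation through the iterated integrals to give the $\cos^n\varphi$ factors and the orthogonality of different orders. The only cosmetic difference is that the paper splits $h=f+ig$ into real and imaginary parts and checks that $F^\theta$ and $G^\theta$ are stationary and stationary correlated, whereas you work directly with the complex $H^\theta$; you also spell out the $L^2$-continuity via $\Phi$, which the paper leaves implicit.
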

\begin{proof} Let us denote similarly  $F^\theta=\int_\mathcal{P}f(A)dX^\theta_A$ and   $G^\theta=\int_\mathcal{P}g(A)dX^\theta_A$. It is enough to show that  $F^\theta$ and $G^\theta$ are measurable, second order stationary and  stationary correlated. Using the chaos expansions  $F^{\theta+\varphi}=\sum_nI_n^{\theta+\varphi}(f_n)$ and $G^{\theta}=\sum_nI_n^{\theta}(g_n)$ that comes from the fact that the bracket of the martingales $X^{\theta+\varphi}$ and $X^\theta$ is 
\begin{equation}\label{ZthetaplusphiZtheta}\langle X^{\theta+\varphi},X^\theta\rangle_t=t\cos\varphi\end{equation}
So $\mathbb{E}[I_n^{\theta+\varphi}(f_n)I_n^\theta(g_n)]=n!\langle f_n,g_n\rangle_{L^2(\lambda_n)}\cos^n\varphi$ and $\mathbb{E}[I_m^{\theta+\varphi}(f_m)I_n^\theta(g_n)]=0$ if $m$ is different of $n$.
\end{proof}

It follows that the stationary process  $H^\theta$ possesses  a spectral representation \begin{equation}\label{spectre}H^\theta=\sum_{n\in\mathbb{Z}}c_ne^{in\theta}\xi_n\end{equation} where the  $c_n$ are real  $\geq0$ and the $\xi_n$ are orthonormal in $L^2_\mathbb{C}(\mathbb{P})$. The norm $\|H^\theta\|^2$ which doesn't depend on  $\theta$ is the total mass of the spectral measure  $\sum c_n^2$. \\

The $c_n$ are linked with the norms of the components of  $H$ on the  chaos by formulas involving Bessel functions. In the case where  $H$ is an exponential vector, 
$$H=\sum_{k=0}^\infty I_k(\frac{h^{\otimes k}}{k!})=\exp[\int hdB-\frac{1}{2}\int h^2dt]$$
with $h=f+ig$ what implies  $\|H\|^2_{L^2_\mathbb{C}}=\exp\|h\|^2$, we have by (\ref{ZthetaplusphiZtheta}) the  covariance
$$\mathbb{E}[H^{\theta+\varphi}\overline{H^\theta}]=\sum_k\frac{1}{k!}\|h\|^{2k}_{L^2_\mathbb{C}(\lambda_1)}\cos^k\varphi=\exp(\|h\|^2\cos\varphi)$$
which is the Fourier transform of the spectral measure hence equal to  $\sum_nc_n^2e^{in\varphi}$. 

By the relation defining the Bessel functions $J_n$ (formula of Schl\"omilch)
$$\exp(iz\sin\varphi)=\sum_{n\in\mathbb{Z}}e^{in\varphi}J_n(z)\qquad z\in\mathbb{C},\quad z\neq0,$$ it comes $c_n^2=i^nJ_n(-i\|h\|^2)$ and for $n\geq0$ (cf \cite{saphar})
$$c^2_n=c^2_{-n}=\sum_{k=0}^\infty\frac{1}{k!(n+k)!}(\frac{\|h\|^2}{2})^{2k+n}.$$
The variables $c_n\xi_n$ may be also expressed in terms of Bessel functions using the expression of exponential vectors for $X^\theta$, cf formula (\ref{expvector}) below. \\

\subsection{Chaotic structure of $L^2(\mathbb{P})$.}

This part is independent of the rest of the paper. It is devoted to the study of chaotic representations for $X^\theta$.

Let us first remark that the above considerations  dont use the chaotic representation property for  $X^\theta$ which is false if $\sin\theta\cos\theta\neq0$, as it is well known cf for instance \cite{dermoune}. Let us denote $L^2(\mathbb{P}_{X^\theta})$ the vector space of $\sigma(X^\theta)$-measurable random variables belonging to $L^2(\mathbb{P})$. That means that, if $\sin\theta\cos\theta\neq0$,  the vector space $C(X^\theta)=\{F^\theta : F\in L^2(\mathbb{P}_1)\}$ which is closed in $L^2(\mathbb{P}_{X^\theta})$ has a non empty complement. 

If we consider the simplest example of the square of a functional of the first Brownian chaos $F=\int hdB$ with $h\in L^1\cap L^\infty$, we have $F^\theta=\int hdX^\theta$ and by Ito formula
$$(F^\theta)^2=2\int\int_0^th(s)dX^\theta_s\;h(t)dX^\theta_t+\int h^2ds+\sin^2\theta\int h^2d\tilde{N}$$ since $\tilde{N}=\sin\theta \,X^\theta+\cos\theta\, X^{\theta+\pi/2}$, we see that 
$$F^\theta=U+\sin^2\theta\cos\theta\int h^2dX^{\theta+\pi/2}$$ with $U\in C (X^\theta)$ and $\int h^2dX^{\theta+\pi/2}$ orthogonal to $C(X^\theta )$.

It follows that for $k\in L^2(\mathbb{R}_+)$, $\int k dX^{\theta+\pi/2}\in L^2(\mathbb{P}_{X^\theta})$ and this implies

\begin{Pro} Let us suppose $\sin\theta\cos\theta\neq0$. The $\sigma$-fields generated by $X^\theta$ and $X^{\theta+\pi/2}$ are the same. The spaces $L^2(\mathbb{P}_{X^\theta})$ do not depend on $\theta$ and are equal to $L^2(\mathbb{P})$.
\end{Pro}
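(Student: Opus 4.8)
The plan is to derive both assertions from the fact established just above the statement: for every real $\theta$ with $\sin\theta\cos\theta\neq0$ and every $k\in L^2(\R_+)$ one has $\int k\,dX^{\theta+\pi/2}\in L^2(\mathbb{P}_{X^\theta})$. First I would apply this with $k=\1_{[0,t]}$, which (using that $X^{\theta+\pi/2}$ is normal, so $\int\1_{[0,t]}\,dX^{\theta+\pi/2}=X^{\theta+\pi/2}_t$) shows that each $X^{\theta+\pi/2}_t$ is $\sigma(X^\theta)$-measurable modulo $\mathbb{P}$-negligible sets; hence $\sigma(X^{\theta+\pi/2})\subseteq\sigma(X^\theta)$.

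Next I would bootstrap to the reverse inclusion. Since $\sin(\theta+\tfrac\pi2)\cos(\theta+\tfrac\pi2)=-\sin\theta\cos\theta\neq0$, the same fact applied with $\theta$ replaced by $\theta+\pi/2$ gives $\sigma(X^{\theta+\pi})\subseteq\sigma(X^{\theta+\pi/2})$; and because $X^{\theta+\pi}=-X^\theta$ one has $\sigma(X^{\theta+\pi})=\sigma(X^\theta)$, so $\sigma(X^\theta)\subseteq\sigma(X^{\theta+\pi/2})$. Combining the two inclusions yields the first assertion, $\sigma(X^\theta)=\sigma(X^{\theta+\pi/2})$.

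For the second assertion I would observe that $(X^\theta,X^{\theta+\pi/2})$ is the image of $(B,\tilde{N})$ under the rotation of angle $\theta$, which is invertible: $B=\cos\theta\,X^\theta-\sin\theta\,X^{\theta+\pi/2}$ and $\tilde{N}=\sin\theta\,X^\theta+\cos\theta\,X^{\theta+\pi/2}$. Hence $\sigma(X^\theta)\vee\sigma(X^{\theta+\pi/2})=\sigma(B,\tilde{N})$, and the latter is (the $\mathbb{P}$-completion of) the whole product $\sigma$-field of $\Omega_1\times\Omega_2$, since $B$ generates the Wiener $\sigma$-field and $\tilde{N}$ (equivalently $N$) the Poisson one. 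Using the equality of $\sigma$-fields just obtained, $\sigma(X^\theta)=\sigma(X^\theta)\vee\sigma(X^{\theta+\pi/2})$ equals this full $\sigma$-field, so $L^2(\mathbb{P}_{X^\theta})=L^2(\mathbb{P})$ for every $\theta$ with $\sin\theta\cos\theta\neq0$; in particular this space does not depend on such a $\theta$.

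The only genuinely delicate ingredient is the one already treated before the statement (the It\^o-formula computation isolating the orthogonal component $\sin^2\theta\cos\theta\int h^2\,dX^{\theta+\pi/2}$ of $(F^\theta)^2$, together with the density argument extending membership in $L^2(\mathbb{P}_{X^\theta})$ from the squares $h^2$, $h\in L^1\cap L^\infty$, to all of $L^2(\R_+)$, via continuity of the isometry $k\mapsto\int k\,dX^{\theta+\pi/2}$). Granting that input, the remaining work is purely bookkeeping with $\sigma$-fields modulo $\mathbb{P}$-null sets plus the trivial facts that the plane rotation is a bijection and that $X^{\theta+\pi}=-X^\theta$, so I do not expect a substantial obstacle here.
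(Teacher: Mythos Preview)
Your proposal is correct and follows exactly the route the paper intends: the paper does not spell out a proof but merely writes ``this implies'' after establishing that $\int k\,dX^{\theta+\pi/2}\in L^2(\mathbb{P}_{X^\theta})$ for all $k\in L^2(\mathbb{R}_+)$, and your argument is precisely the natural unpacking of that implication (indicator $k=\1_{[0,t]}$, symmetry via $\theta\mapsto\theta+\pi/2$ and $X^{\theta+\pi}=-X^\theta$, then the rotation identity to recover $\sigma(B,\tilde N)$).
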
 
The intuitive meaning of this proposition is that on a sample path of $X^\theta$ it is possible to measurably detect the underlying Brownian and Poisson paths.

The multiple stochastic integrals w.r. to $X^\theta$ are not enough to fill $L^2(\mathbb{P}_{X^\theta})$. In view of the previous example, we may think to add the stochastic integrals w.r. to $X^{\theta+\pi/2}$, i.e. to add $C(X^{\theta+\pi/2})$, which is orthogonal to $C(X^\theta)$ and included in $L^2(\mathbb{P}_{X^\theta})$. But this is not sufficient, we must add also the crossed chaos in the following manner:

Let us consider the vector martingale $\mathbf{ X}^\theta=(X^\theta,X^{\theta+\pi/2})$. For $h=(h_1,h_2)\in L^2(\mathbb{R}_+,\mathbb{R}^2)$ we may consider the stochastic integral $\int h.d\mathbf{ X}^\theta$, and more generally (notation of \cite{bouleau-hirsch2} Chap II \S2) 
\begin{equation}\label{chaosnouveaux}\int_{\Delta_n}f.d^{(n)}\mathbf{ X}^\theta
\end{equation}
for $f\in L^2(\Delta_n,(\mathbb{R}^2)^{\otimes n})$. And using (\ref{ZthetaplusphiZtheta}) for $\varphi=\pi/2$ we have
$$\|\int_{\Delta_n}f.d^{(n)}\mathbf{ X}^\theta\|_{L^2(\mathbb{P})}=\| f\|_{L^2(\Delta_n,(\mathbb{R}^2)^{\otimes n})}.
$$ These stochastic integals define orthogonal sub-spaces of $L^2(\mathbb{P}_{X^\theta})=L^2(\mathbb{P})$. Now considering the exponential vector 
$$\mathcal{E}^\theta(h_1,h_2)=\sum_n\frac{1}{n!}\sum_{i_k\in\{1,2\}}\int h_{i_1}\otimes\cdots\otimes h_{i_n}dX^{\alpha_1}\cdots dX^{\alpha_n}$$
where $\alpha_k=\theta$ or $\theta+\pi/2$ according to $i_k=1$ or $2$, and putting $\mathcal{E}_t^\theta(h_1,h_2)$ for $\mathcal{E}^\theta(h_11_{[0,t]},h_21_{[0,t]})$, we see that the following SDE is satisfied
$$\mathcal{E}_t^\theta(h_1,h_2)=1+\int_0^t\mathcal{E}_{s-}^\theta(h_1,h_2)(h_1dX^\theta_s+h_2dX^{\theta+\pi/2}_s)$$ what gives
\begin{equation}\label{expvector}\mathcal{E}_t^\theta(h_1,h_2)=e^{V_t-\frac{1}{2}[V,V]^c_t}\prod_{s\leq t}(1+\Delta V_s)e^{-\Delta V_s}\end{equation} with $V_t=\int_0^th_1dX^\theta+\int_0^th_2dX^{\theta+\pi/2}$. We obtain
\begin{Pro}  \label{casreel} For any $\theta$, the stochastic integrals {\rm (\ref{chaosnouveaux})} define a complete orthogonal decomposition of $L^2(\mathbb{P})$.
\end{Pro}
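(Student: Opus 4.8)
The plan is to show that the closed linear span of the multiple integrals (\ref{chaosnouveaux}), for all $n\geq 0$ and all $f\in L^2(\Delta_n,(\mathbb{R}^2)^{\otimes n})$, is dense in $L^2(\mathbb{P})=L^2(\mathbb{P}_{X^\theta})$; orthogonality of the distinct summands is already established above via the isometry formula, so only completeness is at stake. Since $\mathbf{X}^\theta=(X^\theta,X^{\theta+\pi/2})$ is a (two-dimensional) normal martingale whose components have deterministic mutually orthogonal brackets by (\ref{ZthetaplusphiZtheta}) with $\varphi=\pi/2$, the natural route is to invoke the standard chaotic representation property for the \emph{pair} of driving processes, which does hold even though it fails for $X^\theta$ alone. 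Concretely, $(X^\theta,X^{\theta+\pi/2})$ is an orthogonal linear transformation of $(B,\tilde N)$, hence generates the same filtration and the same $\sigma$-field as $(B,\tilde N)$ (this also re-proves Proposition \ref{casreel}'s $\sigma$-field statement in vector form), and $(B,\tilde N)$ has the chaotic representation property as a pair of independent normal martingales with the representation property individually.

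First I would make the reduction precise: because $X^\theta$ and $X^{\theta+\pi/2}$ are obtained from $B$ and $\tilde N$ by a fixed rotation, $\sigma(\mathbf{X}^\theta_s:s\leq t)=\sigma(B_s,\tilde N_s:s\leq t)$ for every $t$, so $L^2(\mathbb{P}_{X^\theta})=L^2(\mathbb{P})$ and it suffices to decompose a generic $F\in L^2(\mathbb{P})$. Second, I would note that the exponential vectors $\mathcal{E}^\theta(h_1,h_2)$ of (\ref{expvector}), as $(h_1,h_2)$ ranges over a suitable total subset of $L^2(\mathbb{R}_+,\mathbb{R}^2)$ (e.g. bounded functions with bounded support), are total in $L^2(\mathbb{P})$: this is the usual Doléans–Dade/Wick exponential argument, using that such exponentials separate points and, after the usual localization in $h$, lie in every $L^p$, so their closed span is all of $L^2$. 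Third, from the series defining $\mathcal{E}^\theta(h_1,h_2)$ each exponential vector is a convergent sum of the multiple integrals (\ref{chaosnouveaux}) with $f=h_{i_1}\otimes\cdots\otimes h_{i_n}$ symmetrized over the simplex; hence every exponential vector lies in the closed span of the integrals (\ref{chaosnouveaux}), and therefore so does all of $L^2(\mathbb{P})$. Combined with orthogonality, this yields the asserted complete orthogonal decomposition.

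The main obstacle is establishing the totality of the two-parameter exponential vectors and the convergence of their chaos expansion, i.e. genuinely proving the chaotic representation property for the pair $\mathbf{X}^\theta$ rather than quoting it. The cleanest way around this is to transport the known property from $(B,\tilde N)$: a standard Brownian motion and an independent standard compensated Poisson process each enjoy the chaotic representation property, the product structure gives the representation property for the pair in terms of mixed iterated integrals against $dB$ and $d\tilde N$, and then the orthogonal substitution $(B,\tilde N)\mapsto(X^\theta,X^{\theta+\pi/2})$ carries iterated integrals to iterated integrals of the same order with a correspondingly transformed (still $L^2$) kernel, because the transformation is an isometry on $L^2(\mathbb{R}_+,\mathbb{R}^2)$ compatible with the bracket structure. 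One should check that this substitution genuinely preserves the iterated stochastic integral — this is where the normality of the martingales and the deterministic brackets (\ref{ZthetaplusphiZtheta}) are used — and that nothing is lost when restricting to the simplex $\Delta_n$ versus the full cube; both are routine but deserve a line. Everything else (orthogonality, the isometry norm identity) has already been recorded above.
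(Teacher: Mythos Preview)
Your proposal is correct, and the rotation-transport route in particular is sound: since $(X^\theta,X^{\theta+\pi/2})$ is a fixed orthogonal transformation of $(B,\tilde N)$, the $n$-th chaos spaces for the two pairs coincide, and the chaotic representation property for the pair $(B,\tilde N)$ (which follows from the individual CRPs and independence via the Fock-space tensor identity $\mathrm{Fock}(H_1)\otimes\mathrm{Fock}(H_2)\cong\mathrm{Fock}(H_1\oplus H_2)$) transfers immediately. The paper, however, takes a different and more computational path: it expands the exponential vector $\mathcal{E}^\theta(h_1,h_2)$ explicitly via (\ref{expvector}) in terms of $B$ and $\tilde N$, obtaining (up to a constant) $\exp[\int(h_1\cos\theta-h_2\sin\theta)dB+\int u\,d\tilde N]$ with $e^u-1=h_1\sin\theta+h_2\cos\theta$; then, for any step function $\xi$, it solves the $2\times2$ system $h_1\sin\theta+h_2\cos\theta=e^{\xi\sin\theta}-1$, $h_1\cos\theta-h_2\sin\theta=\xi\cos\theta$ to exhibit $\exp[\int\xi\,dX^\theta]$ inside the chaos span, and concludes by the totality of these one-variable exponentials in $L^2(\mathbb P_{X^\theta})=L^2(\mathbb P)$, the last equality being the preceding Proposition. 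Your route is shorter and more structural but quotes the product CRP as an external fact; the paper's route is self-contained relative to the objects already introduced and, as a by-product, shows concretely how functionals of $X^\theta$ alone embed in the two-parameter chaos.
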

\begin{proof} a) Let us suppose first $\sin\theta\cos\theta\neq0$. Starting with (\ref{expvector}) an easy computation yields that $\mathcal{E}_t^\theta(h_1,h_2)$ is equal up to a multiplicative constant to \break
$\exp[{\int_0^t(h_1\cos\theta-h_2\sin\theta)dB+\int_0^t ud\tilde{N}}]$ where we have taken $e^u-1=h_1\sin\theta+h_2\cos\theta$.

If we take a step  function $\xi\in L^2(\mathbb{R}_+)$ and choose $h_1$ and $h_2$ such that
$$\begin{array}{rl}
h_1\sin\theta+h_2\cos\theta&=e^{\xi\sin\theta}-1\\
h_1\cos\theta-h_2\sin\theta&=\xi\cos\theta
\end{array}
$$ we obtain that $\exp[\int\xi dX^\theta]$ belongs to the space generated by the chaos, and the result follows.

b) Now if $\theta=0$, $\mathbf{ X}^\theta=(B,\tilde{N})$. The above argument is still valid and 
$$\mathcal{E}_t^0(h_1,h_2)=\exp[\int_0^th_1dB-\frac{1}{2}\int_0^th_1^2ds+\int_0^tu_2d\tilde{N}+\int_0^tu_2ds]
$$ with $h_2=e^{u_2}-1$. That gives easily the result and the same in the other cases where $\sin\theta\cos\theta\neq0$.
\end{proof}
In other words $L^2(\mathbb{P})$ is isomorphic to the symmetric Fock space $Fock(L^2(\mathbb{R}_+,\mathbb{R}^2))$. This implies the predictable representation property with respect to $\mathbf{X}^\theta$.\\

\section{Derivative in  $\theta$ and gradient of Malliavin.}

We come back to the setting of subsection \ref{ChaoticExtension} with stochastic integrals with respect to the real process $X^\theta$. We want to study the behavior near $\theta=0$ using the fact that $X^0=B$.

But since we deal no more with chaotic representation we may replace $\tilde{N}$ by any normal martingale $M$ independent of $B$ (for instance a centered normalized L\'evy process) define under a probability that we still denote $\mathbb{P}_2$ and as in subsection \ref{ChaoticExtension}, $\mathbb{P}=\mathbb{P}_1\times\mathbb{P}_2$.  Let us put
$$Y^\theta_t=B_t\cos\theta+M_t\sin\theta$$
and consider the chaotic extensions $F\mapsto F^\theta$ with respect to $Y^\theta$ i.e. if 
$F=\sum_{n=0}^\infty I_n(f_n),$ then   $F^\theta=\sum_{n=0}^\infty I_n^\theta(f_n),$
where from now on, $I_n^\theta $ denotes  the multiple stochastic integral with respect to  $Y^\theta$.

To see the connection with the Brownian chaos expansion, let us remark that -- as in the preceding part -- for any $\theta\in\R$, the pair  $(Y_1,Y_2)=(Y^\theta,Y^{\theta+\pi/2})$ is a vector normal martingale in the sense of  \cite{dellacherie} i.e.
\begin{equation}\label{normalite}\langle Y_i,Y_j\rangle_t=\delta_{ij}t\end{equation} this allows to prove the following property 
\begin{Pro}\label{finiteexpansion}{\it For any $F$ with finite Brownian chaos expansion, $F=\sum_{k=0}^nI_n(f_n)$, $f_n$ symmetric, 
$$
\mathbb{E}[(\frac{d}{d\theta}F^\theta)^2]= \sum_{k=0}^n n!n\|f_n\|^2_{L^2}$$
}\end{Pro}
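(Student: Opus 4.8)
Here is how I would approach the proof.

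\medskip

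The plan is to read off $\mathbb{E}[(\frac{d}{d\theta}F^\theta)^2]$ from the covariance function of the second-order stationary process $\theta\mapsto F^\theta$, exactly as in the proof of the first Proposition. Since $F=\sum_{k=0}^n I_k(f_k)$ has a \emph{finite} expansion, $F^\theta=\sum_{k=0}^n I_k^\theta(f_k)$ is a finite sum and no convergence issue arises. The computation in the proof of the first Proposition uses only the bracket identity $\langle Y^{\theta+\varphi},Y^\theta\rangle_t=t\cos\varphi$, which here follows from (\ref{normalite}); it yields $\mathbb{E}[I_m^{\theta+\varphi}(f_m)\,I_k^{\theta}(f_k)]=0$ for $m\neq k$, while for each $k$ the process $\theta\mapsto I_k^\theta(f_k)$ is second-order stationary (and centered for $k\geq1$) with covariance function
$$\Phi_k(\varphi):=\mathbb{E}\big[I_k^{\theta+\varphi}(f_k)\,I_k^{\theta}(f_k)\big]=k!\,\|f_k\|^2_{L^2}\,\cos^k\varphi .$$

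\medskip

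Next I would establish mean-square differentiability in $\theta$. Each $\Phi_k$ is a trigonometric polynomial, hence $C^\infty$ and even, so by the classical criterion for a stationary process the difference quotient $\varepsilon^{-1}\big(I_k^{\theta+\varepsilon}(f_k)-I_k^{\theta}(f_k)\big)$ is Cauchy in $L^2(\mathbb{P})$ as $\varepsilon\to0$: its inner products are second differences of $\Phi_k$ divided by $\varepsilon\varepsilon'$, which converge to $-\Phi_k''(0)$. Hence $\frac{d}{d\theta}I_k^\theta(f_k)$ exists in $L^2(\mathbb{P})$ with $\mathbb{E}[(\frac{d}{d\theta}I_k^\theta(f_k))^2]=-\Phi_k''(0)$, and $\frac{d}{d\theta}F^\theta=\sum_{k=0}^n\frac{d}{d\theta}I_k^\theta(f_k)$. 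The orthogonality of distinct chaos levels survives passage to these $L^2$-limits, so
$$\mathbb{E}\Big[\Big(\frac{d}{d\theta}F^\theta\Big)^2\Big]=\sum_{k=0}^n\big(-\Phi_k''(0)\big)=\sum_{k=0}^n k!\,\|f_k\|^2_{L^2}\Big(-\tfrac{d^2}{d\varphi^2}\cos^k\varphi\Big|_{\varphi=0}\Big)=\sum_{k=0}^n k\,k!\,\|f_k\|^2_{L^2},$$
since $\cos^k\varphi=1-\tfrac{k}{2}\varphi^2+O(\varphi^4)$ near $0$.

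\medskip

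I would also record the more concrete, equivalent computation: since $\frac{d}{d\theta}Y^\theta=Y^{\theta+\pi/2}$, one expects
$$\frac{d}{d\theta}I_k^\theta(f_k)=k!\sum_{j=1}^k\int_{s_1<\cdots<s_k}f_k(s_1,\dots,s_k)\,dY^\theta_{s_1}\cdots dY^{\theta+\pi/2}_{s_j}\cdots dY^\theta_{s_k},$$
a sum of $k$ mutually orthogonal ``crossed'' iterated integrals with respect to the vector normal martingale $(Y^\theta,Y^{\theta+\pi/2})$, each of squared $L^2$-norm $(k!)^2\|f_k\|^2_{L^2(\Delta_k)}=k!\,\|f_k\|^2_{L^2}$, giving again $k\,k!\,\|f_k\|^2_{L^2}$. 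The only delicate point — and the main obstacle — is the meaning of $\frac{d}{d\theta}$, which is the derivative in $L^2(\mathbb{P})$ and must be shown to exist; this is routine here because the expansion is finite and the $\Phi_k$ are smooth, whereas the genuine version of this step for infinite expansions is what the sequel of the paper has to deal with. (Note that the constant in the statement should read $\sum_{k=0}^n k\,k!\,\|f_k\|^2_{L^2}$, the $n$'s being a typo for $k$'s.)
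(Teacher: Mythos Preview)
Your proposal is correct and the covariance-based argument is sound: the cross-chaos orthogonality $\mathbb{E}[I_m^{\theta_1}(f_m)I_k^{\theta_2}(f_k)]=0$ for $m\neq k$ indeed follows from (\ref{normalite}) exactly as in Proposition~1, and since each $\Phi_k$ is an even trigonometric polynomial the classical mean-square differentiability criterion applies and yields $-\Phi_k''(0)=k\,k!\,\|f_k\|^2$ directly.

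The paper, however, takes a different route. Rather than appealing to the covariance function and the abstract stationary-process criterion, it works explicitly on elementary tensors $f_n=g_1\otimes\cdots\otimes g_n$, writes $I_n^\theta(f_n)$ as an iterated integral, and differentiates in $\theta$ by the product rule: one term has the inner integral differentiated and integrated against $dY^\theta$, the other keeps the inner integral and replaces the outermost $dY^\theta$ by $dY^{\theta+\pi/2}$. Using (\ref{normalite}) these two pieces are orthogonal, which gives a recursion in $n$; running it down yields $n\,n!\,\|f_n\|^2$. The paper then extends to general $f_n$ by density and checks the cross-chaos orthogonality of the derivatives in the same inductive way. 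This is essentially the ``concrete computation'' you sketch at the end, carried out by induction rather than written as a sum of $k$ crossed integrals.

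What each approach buys: your covariance argument is shorter and more conceptual, and makes the role of stationarity transparent. The paper's inductive computation is more hands-on but has the merit of producing, along the way, the explicit structure of $\frac{d}{d\theta}I_n^\theta(f_n)$ as a sum of iterated integrals in $(Y^\theta,Y^{\theta+\pi/2})$; this concrete form is what is implicitly reused in the proof of Proposition~\ref{derivee2} to identify the limit with $\int D_sF\,dM_s$. Your argument gives the norm but not, by itself, this identification.
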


\begin{proof}
Our notation is $F^\theta=\sum_{k=0}^nI_k^\theta(f_k)$ and $F^0=F=\sum_{k=0}^nI_k(f_k)$.

Let us consider first  $I_n^\theta(f_n)$ in the case of an elementary tensor $f_n=g_1\otimes\cdots\otimes g_n$. We can write this multiple integral (with the notation of Bouleau-Hirsch  \cite{bouleau-hirsch2} p79)
$$I_n^\theta(f_n)=n!\int_{\Delta_n}f_n\,d^{(n)}Y^\theta=n!\int_0^t[\int_{\Delta_{n-1}(s)}g_1\otimes\cdots\otimes g_{n-1}d^{(n-1)}Y^\theta]g_n(s)dY^\theta_s$$
so that
$$\begin{array}{rl}
\frac{d}{d\theta}I_n^\theta(f_n)=&n!\int_0^t\frac{d}{d\theta}[\int_{\Delta_{n-1}(s)}g_1\otimes\cdots\otimes g_{n-1}d^{(n-1)}Y^\theta]g_n(s)dY^\theta_s\\
&\\
&+n!\int_0^t[\int_{\Delta_{n-1}(s)}g_1\otimes\cdots\otimes g_{n-1}d^{(n-1)}Y^\theta]g_n(s)dY^{\theta+\pi/2}_s
\end{array}
$$ hence by  (\ref{normalite})
$$\begin{array}{rl}
\mathbb{E}[(\frac{d}{d\theta}I_n^\theta(f_n))^2]=&(n!)^2\int_0^t\mathbb{E}[(\frac{d}{d\theta}[\int_{\Delta_{n-1}(s)}g_1\otimes\cdots\otimes g_{n-1}d^{(n-1)}Y^\theta])^2]g_n^2(s)ds\\
&\\
&+(n!)^2\int_0^t\mathbb{E}[(\int_{\Delta_{n-1}(s)}g_1\otimes\cdots\otimes g_{n-1}d^{(n-1)}Y^\theta)^2]g_n^2(s)ds
\end{array}
$$ what gives, running the induction down, 
$$\mathbb{E}[(\frac{d}{d\theta}I_n^\theta(f_n))^2]=n(n!)^2\int_{\Delta_{n}}(g_1\otimes\cdots\otimes g_{n-1}\otimes g_n)^2d\lambda_n=n.n!\|f_n\|^2.
$$
This extends to general tensors and similarly we can show that if  $k\neq\ell$
$$\mathbb{E}[(\frac{d}{d\theta}I_k^\theta(f_k))(\frac{d}{d\theta}I_\ell^\theta(f_\ell))]=0$$ what yields the  proposition.\end{proof}

We denote by $\mathbb{D}$, the domain of the Ornstein-Uhlenbeck form. We recall that an element $F=\sum_{n=0}^{+\infty} I_n (f_n )\in L^2 (\mathbb{P}_1)$ belongs to $\mathbb{D}$ iff
$$\sum_{n=0}^{+\infty} nn!\parallel f_n\parallel^2_{L^2}<+\infty .$$
Let us take now an  $F\in\mathbb{D}$,  the random variables  $\sum_{k=0}^nI_k^\theta(f_k)$  converge in $L^2(\mathbb{P})$ to $F^\theta$    uniformly in  $\theta$. Their derivatives  -- because $F\in\mathbb{D}$ -- form a Cauchy sequence and converge also uniformly in $\theta$. This implies that $F^\theta$ is differentiable and that the derivatives of the $\sum_{k=0}^nI_k^\theta(f_k)$ converge to the derivative of   $F^\theta$. So we have
\begin{Pro}\label{derivee1}{\it $\forall F\in\mathbb{D}$ the process $\theta\mapsto F^\theta$ is  differentiable in $L^2(\mathbb{P})$ and
$$\forall\theta\qquad\mathbb{E}[(\frac{d}{d\theta}F^\theta)^2]=\sum_{n\in\mathbb{Z}}n^2c_n^2=\mathbb{E}\Gamma[F]=2\mathcal{E}[F]$$ where $\mathcal{E}$ is the Ornstein-Uhlenbeck form and $\Gamma$ the associated  carr\'e du champ operator.
}\end{Pro}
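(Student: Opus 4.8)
The plan is to upgrade Proposition \ref{finiteexpansion} from finite expansions to all of $\mathbb{D}$ by a density/closability argument, and then to identify the resulting limit with the two standard objects attached to the Ornstein--Uhlenbeck structure. First I would fix $F=\sum_{n=0}^\infty I_n(f_n)\in\mathbb{D}$ and set $F_N^\theta=\sum_{n=0}^N I_n^\theta(f_n)$. By the isometry $\|I_n^\theta(f_n)\|_{L^2(\mathbb{P})}^2=n!\|f_n\|_{L^2}^2$ (uniform in $\theta$, as recalled in the excerpt), $F_N^\theta\to F^\theta$ in $L^2(\mathbb{P})$ uniformly in $\theta$. By Proposition \ref{finiteexpansion} and its orthogonality corollary (the cross terms $\mathbb{E}[\tfrac{d}{d\theta}I_k^\theta(f_k)\,\tfrac{d}{d\theta}I_\ell^\theta(f_\ell)]=0$ for $k\neq\ell$), the derivatives satisfy
$$\mathbb{E}\Bigl[\bigl(\tfrac{d}{d\theta}(F_M^\theta-F_N^\theta)\bigr)^2\Bigr]=\sum_{n=N+1}^M n\,n!\,\|f_n\|_{L^2}^2\xrightarrow[N,M\to\infty]{}0,$$
the tail of a convergent series precisely because $F\in\mathbb{D}$. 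Hence $(\tfrac{d}{d\theta}F_N^\theta)_N$ is Cauchy in $L^2(\mathbb{P})$, uniformly in $\theta$; calling the limit $G^\theta$, a standard argument (difference quotients of $F_N^\theta$ converge to those of $F^\theta$, and one interchanges the two limits using the uniform-in-$\theta$ convergence) shows $\theta\mapsto F^\theta$ is differentiable in $L^2(\mathbb{P})$ with $\tfrac{d}{d\theta}F^\theta=G^\theta$, and $\mathbb{E}[(\tfrac{d}{d\theta}F^\theta)^2]=\sum_{n=0}^\infty n\,n!\,\|f_n\|_{L^2}^2$, which is independent of $\theta$. This already gives the first equality once one notes $\sum_{n\in\mathbb{Z}}n^2c_n^2=\sum_n n\,n!\|f_n\|^2$: indeed the spectral representation \eqref{spectre} of the stationary process $H^\theta$ gives $\mathbb{E}[(\tfrac{d}{d\theta}H^\theta)^2]=\sum_n n^2 c_n^2$ by termwise differentiation of the norm-convergent Fourier series (legitimate exactly when this sum is finite), so the two expressions for $\mathbb{E}[(\tfrac{d}{d\theta}F^\theta)^2]$ must agree.

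It then remains to recognize $\sum_{n=0}^\infty n\,n!\,\|f_n\|_{L^2}^2$ as $\mathbb{E}\,\Gamma[F]$ and as $2\mathcal{E}[F]$. This is the classical description of the Ornstein--Uhlenbeck form on Wiener chaos: for $F=\sum_n I_n(f_n)\in\mathbb{D}$ one has $\mathcal{E}[F]=\tfrac12\sum_n n\,n!\,\|f_n\|_{L^2}^2$ (equivalently $F\in\mathrm{Dom}(L)$ with $LF=-\sum_n n I_n(f_n)$ and $\mathcal{E}[F]=-\mathbb{E}[F\,LF]$), and $\mathbb{E}\,\Gamma[F]=2\mathcal{E}[F]$ by the very definition of the carré du champ together with $\mathbb{E}[\Gamma[F]]=\mathbb{E}[-2F LF]$ when $\mathbb{E}[F]=0$ (and $\Gamma$ annihilates constants). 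I would simply cite this from \cite{bouleau-hirsch2} (or \cite{nualart}) rather than rederive it, so that the only real content to verify here is the chaos-by-chaos computation $\mathbb{E}[(\tfrac{d}{d\theta}I_n^\theta(f_n))^2]=n\,n!\,\|f_n\|^2$, which Proposition \ref{finiteexpansion} has already done.

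The main obstacle is the justification of differentiability of the limit $\theta\mapsto F^\theta$, i.e. the interchange of the limit in $N$ with the limit defining the derivative in $\theta$. The clean way is to argue that on each compact $\theta$-interval the functions $F_N^\theta$ are $C^1$ with derivatives converging uniformly in $\theta$ (in the $L^2(\mathbb{P})$ norm) to $G^\theta$, while $F_N^\theta\to F^\theta$ pointwise (in fact uniformly) in $\theta$; the classical theorem on term-by-term differentiation then gives $\tfrac{d}{d\theta}F^\theta=G^\theta$. One subtlety worth a line is that the uniformity in $\theta$ of the Cauchy estimate above is automatic because the bound $\sum_{n>N} n\,n!\|f_n\|^2$ does not involve $\theta$ at all — this is exactly what makes the argument go through, and it is the reason Proposition \ref{finiteexpansion} was stated with a $\theta$-independent right-hand side. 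A secondary, purely bookkeeping point is to confirm that termwise differentiation of the spectral series $\sum_n c_n e^{in\theta}\xi_n$ is valid under $\sum_n n^2 c_n^2<\infty$, which holds here precisely because the left-hand side $\mathbb{E}[(\tfrac{d}{d\theta}F^\theta)^2]$ has just been shown finite.
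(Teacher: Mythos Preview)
Your proof is correct and follows the same route as the paper: the paper's argument (given in the paragraph immediately preceding the proposition) is exactly your density argument --- partial sums $F_N^\theta$ converge to $F^\theta$ uniformly in $\theta$ by the isometry, their $\theta$-derivatives form a Cauchy sequence uniformly in $\theta$ by Proposition~\ref{finiteexpansion} and the hypothesis $F\in\mathbb{D}$, and the classical theorem on term-by-term differentiation yields the result. You have simply spelled out more explicitly the Cauchy estimate, the interchange-of-limits step, and the identifications $\sum_{n\in\mathbb{Z}} n^2 c_n^2=\sum_n n\,n!\,\|f_n\|^2=2\mathcal{E}[F]$, which the paper leaves implicit.
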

{ We also have the converse property:
\begin{Pro} Let $F\in L^2 (\mathbb{P}_1)$. If the map $\theta \mapsto F^\theta$ is differentiable in $L^2 (\mathbb{P})$ at a certain point  $\theta_0 \in \mathbb{R}$ then $F$ belongs to $\mathbb{D}$. 
\end{Pro}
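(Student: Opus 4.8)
The plan is to reduce the statement directly to the finite-chaos estimate of Proposition~\ref{finiteexpansion}, by a Pythagoras argument on the difference quotients $\tfrac1h(F^{\theta_0+h}-F^{\theta_0})$ that exploits the orthogonality, in $L^2(\mathbb{P})$, of the multiple integrals $I_m^\theta(f_m)$ of distinct orders --- an orthogonality that holds even when these are taken with respect to $Y^\theta$ at different angles and that is preserved under differentiation in $\theta$.

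First I would record that differentiability of $\theta\mapsto F^\theta$ in $L^2(\mathbb{P})$ at $\theta_0$ forces the difference quotients to be bounded near $h=0$: there exist $C<\infty$ and $\delta>0$ with $\|\tfrac1h(F^{\theta_0+h}-F^{\theta_0})\|^2_{L^2(\mathbb{P})}\leq C$ for $0<|h|<\delta$. Writing $F=\sum_{m\geq0}I_m(f_m)$, so that $F^\theta=\sum_{m\geq0}I_m^\theta(f_m)$ with convergence in $L^2(\mathbb{P})$, I would then invoke the orthogonality relation $\mathbb{E}[I_m^{\alpha}(f_m)I_n^{\beta}(f_n)]=0$ for $m\neq n$ and arbitrary angles $\alpha,\beta$; this is exactly the computation carried out in the proof of the first Proposition of Section~\ref{Section1}, which uses nothing but $\langle Y^\alpha,Y^\beta\rangle_t=t\cos(\alpha-\beta)$ and is therefore valid for any normal martingale $M$. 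It follows that the random variables $I_m^{\theta_0+h}(f_m)-I_m^{\theta_0}(f_m)$, $m\geq0$, are pairwise orthogonal in $L^2(\mathbb{P})$, so that for every integer $N$
\[
\sum_{m=0}^{N}\Bigl\|\tfrac1h\bigl(I_m^{\theta_0+h}(f_m)-I_m^{\theta_0}(f_m)\bigr)\Bigr\|^2_{L^2(\mathbb{P})}\;\leq\;\Bigl\|\tfrac1h\bigl(F^{\theta_0+h}-F^{\theta_0}\bigr)\Bigr\|^2_{L^2(\mathbb{P})}\;\leq\;C.
\]

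Then I would fix $N$ and let $h\to0$ in this \emph{finite} sum. Each map $\theta\mapsto I_m^\theta(f_m)$ is differentiable in $L^2(\mathbb{P})$ with $\|\tfrac{d}{d\theta}I_m^\theta(f_m)\|^2_{L^2(\mathbb{P})}=m\,m!\,\|f_m\|^2_{L^2}$ independently of $\theta$: for an elementary tensor this is the iterated-integral computation in the proof of Proposition~\ref{finiteexpansion}, and for a general symmetric $f_m\in L^2(\lambda_m)$ it follows by approximation, since the identities $\|I_m^\theta(g)\|^2_{L^2(\mathbb{P})}=m!\,\|g\|^2_{L^2}$ and $\|\tfrac{d}{d\theta}I_m^\theta(g)\|^2_{L^2(\mathbb{P})}=m\,m!\,\|g\|^2_{L^2}$ make an approximating sequence of tensors and its $\theta$-derivatives Cauchy uniformly in $\theta$ --- the same device used just before Proposition~\ref{derivee1}. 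Passing to the limit term by term in the finite sum above therefore gives $\sum_{m=0}^{N}m\,m!\,\|f_m\|^2_{L^2}\leq C$ for every $N$, hence $\sum_{m\geq0}m\,m!\,\|f_m\|^2_{L^2}\leq C<\infty$, which is precisely the condition $F\in\mathbb{D}$.

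The step requiring genuine care is this last limit passage together with the differentiability of a single chaos $\theta\mapsto I_m^\theta(f_m)$ for non-elementary kernels; both are controlled by the uniform-in-$\theta$ norm identities recalled above and are in fact already implicit in the proof of Proposition~\ref{finiteexpansion}. An alternative route would go through the spectral representation~(\ref{spectre}): the identity $\|\tfrac1h(F^{\theta_0+h}-F^{\theta_0})\|^2_{L^2(\mathbb{P})}=\sum_{n\in\mathbb{Z}}c_n^2\,\tfrac{4\sin^2(nh/2)}{h^2}$ together with Fatou's lemma along a sequence $h_k\to0$ yields $\sum_{n}n^2c_n^2<\infty$, but converting this back into a statement on the kernels $f_m$ would require the Bessel-function relations, so I would prefer the direct argument. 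Combined with Proposition~\ref{derivee1}, the conclusion is that $\theta\mapsto F^\theta$ is differentiable at one point if and only if it is differentiable at every point, if and only if $F\in\mathbb{D}$.
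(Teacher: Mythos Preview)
Your argument is correct and follows essentially the same route as the paper: bound the difference quotients by differentiability at $\theta_0$, use the orthogonality of different chaos orders (even at distinct angles) to split the $L^2$-norm, and then pass to the limit termwise using the single-chaos identity $\|\tfrac{d}{d\theta}I_m^\theta(f_m)\|^2=m\,m!\,\|f_m\|^2$ from Proposition~\ref{finiteexpansion}. The only cosmetic difference is that the paper writes the decomposition as an equality and invokes Fatou's lemma on the full sum, whereas you truncate to a finite sum and let $N\to\infty$; these are equivalent.
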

\begin{proof}
 We write $F=\sum_n I_n (f_n)$ and consider a sequence $(\theta_k )_{k\geq 1}$ which converges to $\theta_0$ and such that $\theta_k \neq \theta_0$, for all $k\geq 1$. As 
 $$\lim_{k\rightarrow +\infty}\displaystyle\frac{F^{\theta_k}-F^{\theta_0}}{\theta_k -\theta_0} $$
 exists in $L^2 (\mathbb{P})$ we deduce that there exists a constant $C>0$ such that 
 $$\forall k\geq 1 ,\ \left\| \displaystyle\frac{F^{\theta_k}-F^{\theta_0}}{\theta_k -\theta_0} \right\|^2_{L^2 (\mathbb{P})}=\sum_n  \left\| \displaystyle\frac{I_n^{\theta_k}(f_n)-I_n^{\theta_0}(f_n)}{\theta_k -\theta_0} \right\|^2_{L^2 (\mathbb{P})}\leq C.$$
By the Fatou's Lemma and the previous Proposition, we get 
  $$\sum_n \lim_{k\rightarrow +\infty} \left\| \displaystyle\frac{I_n^{\theta_k}(f_n)-I_n^{\theta_0}(f_n)}{\theta_k -\theta_0} \right\|^2_{L^2 (\mathbb{P})}=\sum_n n! n \| f_n \|^2_{L^2}\leq C,$$
which yields the result.
\end{proof}}
This provides the following result :
\begin{Pro}\label{derivee2} {\it For all $F\in\mathbb{D}$ with chaotic representation $F=\int_\mathcal{P}f(A)dB_A$
$$\frac{dF^\theta}{d\theta}|_{\theta=0}=\frac{d}{d\theta}\int_\mathcal{P}f(A)d(B\cos\theta+M\sin\theta)_A\;|_{\theta=0}=\int D_sF\;dM_s$$ the righthand term is a gradient for the Ornstein-Uhlenbeck form that we may denote  $F^\sharp$, so we have 
 in the sense of  $L^2(\mathbb{P})=L^2(\mathbb{P}_1\times\mathbb{P}_2)$
$$F^\sharp=\lim_{\theta\rightarrow0}\frac{1}{\theta}(F^\theta-F).$$
}\end{Pro}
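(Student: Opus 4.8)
The plan is to reduce everything to a single Wiener chaos, compute the $\theta$-derivative there exactly as in the proof of Proposition \ref{finiteexpansion}, recognize the outcome as $\int D_sF\,dM_s$, and then read off both the limit formula and the gradient property. For the reduction: if $F=\sum_n I_n(f_n)\in\mathbb{D}$, the partial sums $S_N=\sum_{n\le N}I_n(f_n)$ converge to $F$ in the graph norm of $\mathbb{D}$; by the uniform-in-$\theta$ convergence argument used in the discussion preceding Proposition \ref{derivee1}, $\frac{d}{d\theta}S_N^\theta\to\frac{d}{d\theta}F^\theta$ in $L^2(\mathbb{P})$, and since $D\colon\mathbb{D}\to L^2(\mathbb{P}_1\times\R_+)$ is continuous while $g\mapsto\int g\,dM$ is an $L^2$-isometry, $\int D_sS_N\,dM_s\to\int D_sF\,dM_s$ in $L^2(\mathbb{P})$. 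So it suffices to prove $\frac{d}{d\theta}I_n^\theta(f_n)|_{\theta=0}=\int D_sI_n(f_n)\,dM_s$ for each $n$ and $f_n$ symmetric.

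For the left-hand side I would differentiate the iterated-integral representation $I_n^\theta(f_n)=n!\int_{s_1<\cdots<s_n}f_n\,dY^\theta_{s_1}\cdots dY^\theta_{s_n}$ as in the proof of Proposition \ref{finiteexpansion}, using $\frac{d}{d\theta}Y^\theta=Y^{\theta+\pi/2}$ and $Y^{\pi/2}=M$; the Leibniz expansion (justified inductively in $L^2$ via the normality (\ref{normalite})) gives
$$\frac{d}{d\theta}I_n^\theta(f_n)\Big|_{\theta=0}=\sum_{i=1}^{n}n!\int_{s_1<\cdots<s_n}f_n(s_1,\ldots,s_n)\,dB_{s_1}\cdots dB_{s_{i-1}}\,dM_{s_i}\,dB_{s_{i+1}}\cdots dB_{s_n}.$$
For the right-hand side I would use $D_sI_n(f_n)=nI_{n-1}(f_n(\cdot,s))$: since this is $\mathbb{P}_1$-measurable and $M$ is an independent normal martingale, $\int D_sI_n(f_n)\,dM_s$ is a well-defined mixed multiple integral with respect to the two-dimensional vector normal martingale $(B,M)$ (no bracket correction since $\langle B,M\rangle=0$, cf. (\ref{normalite}) and the multiple-integral calculus of \cite{bouleau-hirsch2}). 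Splitting $\R_+$ according to the rank of the $M$-variable among the $B$-variables and using the symmetry of $f_n$ turns this into precisely the same sum of $n$ simplex integrals. Hence the two sides agree; summing over $n$ gives $\frac{d}{d\theta}F^\theta|_{\theta=0}=\int D_sF\,dM_s=:F^\sharp$, and because $F^0=F$ and $\theta\mapsto F^\theta$ is differentiable at $0$ (Proposition \ref{derivee1}) this is the same as $F^\sharp=\lim_{\theta\to0}\frac1\theta(F^\theta-F)$ in $L^2(\mathbb{P})$.

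It remains to check that $\sharp$ is a gradient for the Ornstein--Uhlenbeck structure. View $F^\sharp$ as the map $\omega_1\mapsto F^\sharp(\omega_1,\cdot)\in L^2(\mathbb{P}_2)$, which is legitimate because, for $\mathbb{P}_1$-a.e.\ $\omega_1$, $s\mapsto D_sF(\omega_1)$ lies in $L^2(\R_+)$ (as $F\in\mathbb{D}$), so one is simply integrating a deterministic $L^2$-function against the normal martingale $M$. Conditioning on $\mathbb{P}_1$ and using the isometry of that integral yields $\mathbb{E}_{\mathbb{P}_2}[(F^\sharp)^2]=\int_0^\infty(D_sF)^2\,ds=\Gamma[F]$ ($\mathbb{P}_1$-a.s.) and $\mathbb{E}_{\mathbb{P}_2}F^\sharp=0$; $\sharp$ is clearly linear, and the chain rule $(\varphi(F))^\sharp=\varphi'(F)F^\sharp$ follows from the chain rule for $D$ together with the fact that $\varphi'(F)$ is $\mathbb{P}_1$-measurable, hence factors out of $\int\cdot\,dM$. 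These are exactly the properties required of a gradient in the sense of \cite{bouleau-hirsch2}, with auxiliary Hilbert space $L^2(\mathbb{P}_2)$; integrating the norm identity over $\mathbb{P}_1$ recovers $\mathbb{E}[(F^\sharp)^2]=\mathbb{E}\Gamma[F]=2\mathcal{E}[F]$ of Proposition \ref{derivee1}.

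The main obstacle is the second step: giving a rigorous meaning to $\int D_sF\,dM_s$, whose integrand is not adapted, and matching it term-by-term with the Leibniz expansion of $\frac{d}{d\theta}F^\theta$. Both points are handled by exploiting the independence of $B$ and $M$ (which allows one to condition on $\Omega_1$, and lets the $M$-differential be freely interleaved among the $B$-differentials) and the symmetry of the kernels $f_n$; what is left is the routine bookkeeping of positions and factorials, the convergence being already covered by Proposition \ref{derivee1}.
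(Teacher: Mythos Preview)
Your proof is correct and follows essentially the same route as the paper: reduce to finite chaos via the $\mathbb{D}$-approximation controlled by Proposition~\ref{derivee1}, then on each chaos differentiate the iterated integral by the Leibniz argument of Proposition~\ref{finiteexpansion} and identify the result with $\int D_sI_n(f_n)\,dM_s$. The paper organizes the passage to the limit as a three-term triangle inequality on $\frac{1}{\theta}(F^\theta-F)$ rather than via convergence of derivatives, but this is the same estimate; your write-up is in fact more explicit than the paper's, both in matching the Leibniz sum with $\int D_sI_n(f_n)\,dM_s$ and in checking the gradient axioms for~$\sharp$.
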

\begin{proof}
\noindent Let be $F\in\mathbb{D}$. 
We assess the  distance between $\frac{1}{\theta}({F}^\theta-{F})$ and $\int D_sF\;dM_s$ by steps :
distance between $\frac{1}{\theta}({F}^\theta-{F})$ and $\frac{1}{\theta}({F}^\theta_n-{F}_n)$ ; between $\frac{1}{\theta}({F}^\theta_n-{F}_n)$ and $\int D_sF_n\;dM_s$ ; 
then between $\int D_sF_n\;dM_s$ and $\int D_sF\;dM_s$.
 
By the preceding propositions
 $$\|\frac{1}{\theta}({F}^\theta-{F})-\frac{1}{\theta}({F}^\theta_n-{F}_n)\|^2\leq \|{F}-{F}_n\|_\mathbb{D}^2$$
We may choose  $n$ so that the first one and the third one be both small independently of  $\theta$. And $n$ being fixed we do $\theta\rightarrow 0$ in the second one and apply the argument of the proof of Proposition \ref{finiteexpansion}.\end{proof}

The classical integration by part formula, i.e. the property that the divergence operator, dual of   $D$, can be expressed by a stochasitic integral on predictable processes, is a consequence of propositions  \ref{derivee1} and \ref{derivee2} by derivation in $\theta$.

Indeed let us denote $\mathcal{A}$ the closed sub-vector space of  $L^2(\mathbb{P},L^2(\mathbb{R}_+,dt))$ generated by the processes of the form  $\int_{\Delta(t)}f_nd^{(n)}B$ with $f_n\in L^2(\mathbb{R}^n)$.
\begin{Le} For $F\in L^2(\mathbb{P}_1)$, $G\in\mathcal{A}$, we have
$$\mathbb{E}[F^\theta\int G^\theta_tdY_t^{\theta+\pi/2}]=0.$$
\end{Le}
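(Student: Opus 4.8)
The plan is to reduce everything to an orthogonality statement at the level of chaos and then exploit the normality relations $\langle Y_i,Y_j\rangle_t=\delta_{ij}t$. First I would note that it suffices to prove the identity when $F$ has a finite chaos expansion $F=\sum_{k=0}^n I_k(f_k)$ and when $G$ is one of the generating processes $G_\cdot=\int_{\Delta_{m}(\cdot)}g_m\,d^{(m)}B$, since both sides are continuous in $F\in L^2(\mathbb{P}_1)$ (using the isometry $\|I^\theta_k(f_k)\|=\sqrt{k!}\,\|f_k\|$) and in $G\in\mathcal{A}$, and then pass to the limit. For such $G$, the chaotic extension replaces $B$ by $Y^\theta$, giving $G^\theta_t=\int_{\Delta_m(t)}g_m\,d^{(m)}Y^\theta$, so $\int G^\theta_t\,dY^{\theta+\pi/2}_t$ is again a multiple integral of order $m+1$ but in the vector martingale $\mathbf{Y}^\theta=(Y^\theta,Y^{\theta+\pi/2})$: concretely it is $\int_{\Delta_{m+1}} (g_m\otimes 1)\cdot d^{(m+1)}\mathbf{Y}^\theta$ where the last stochastic differential is taken in the second coordinate $Y^{\theta+\pi/2}$ while the first $m$ are taken in $Y^\theta$.

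Next I would compute $\mathbb{E}[F^\theta \int G^\theta_t\,dY^{\theta+\pi/2}_t]$ by expanding $F^\theta=\sum_k I^\theta_k(f_k)$ and pairing each term against the $(m{+}1)$-fold mixed integral above. The key point is a Wiener--Itô type orthogonality: two multiple integrals with respect to a vector normal martingale are orthogonal as soon as their "coloured" index structures differ, and in particular the pure $Y^\theta$-integral $I^\theta_k(f_k)$ is orthogonal to any mixed integral that uses at least one differential $dY^{\theta+\pi/2}$. This follows exactly as in the proof of Proposition \ref{finiteexpansion}: one conditions successively, and at the innermost level the stochastic integral $\int(\cdots)\,dY^{\theta+\pi/2}$ against a $Y^\theta$-integrand has zero expectation because $\langle Y^\theta,Y^{\theta+\pi/2}\rangle=0$ (formula (\ref{normalite}), i.e. (\ref{ZthetaplusphiZtheta}) with $\varphi=\pi/2$). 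Since every term of $F^\theta$ is a pure $Y^\theta$-integral and $\int G^\theta_t\,dY^{\theta+\pi/2}_t$ carries exactly one $Y^{\theta+\pi/2}$-differential, every pairing vanishes, and the claim follows.

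Alternatively — and this is probably the cleanest write-up — I would use the isomorphism $L^2(\mathbb{P})\simeq \mathrm{Fock}(L^2(\mathbb{R}_+,\mathbb{R}^2))$ from Proposition \ref{casreel}: under it $F^\theta$ lies in the closed subspace $C(Y^\theta)$ spanned by chaos built purely from the first coordinate, while $\int G^\theta_t\,dY^{\theta+\pi/2}_t$ lies in the orthogonal complement (a genuinely "mixed" chaos of order $m+1$, with one factor in the second coordinate), and orthogonality of the Fock decomposition gives the result directly. I would still spell out the reduction to elementary tensors and finite expansions so that the stochastic-calculus manipulations are justified term by term.

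The main obstacle I anticipate is bookkeeping rather than anything deep: one must be careful that $\int G^\theta_t\,dY^{\theta+\pi/2}_t$ is a well-defined $L^2$ stochastic integral (which needs $G^\theta\in L^2(\mathbb{P}, L^2(\mathbb{R}_+,dt))$ — fine since $G\in\mathcal{A}$ and the chaotic extension is an isometry on each chaos), and that the interchange of the infinite sum $\sum_k$ with the expectation is legitimate, which again rests on the uniform-in-$\theta$ $L^2$ convergence already established before Proposition \ref{derivee1}. Identifying $\int_{\Delta_m(t)}g_m\,d^{(m)}Y^\theta$ followed by $dY^{\theta+\pi/2}_t$ with a bona fide element of the vector-martingale chaos (so that Proposition \ref{casreel}'s orthogonality applies) is the one place where one should invoke the associativity/Fubini properties of iterated stochastic integrals with respect to $\mathbf{Y}^\theta$ carefully.
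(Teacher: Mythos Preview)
Your main argument is correct and is exactly the paper's approach: the paper's proof is a single sentence stating that relation~(\ref{normalite}) gives the result for $F$ with finite chaos expansion, and then passes to general $F\in L^2$ by density --- you have simply unpacked this. One small caveat on your alternative route: Proposition~\ref{casreel} is proved in Section~\ref{Section1} for the specific case $M=\tilde N$, whereas the present lemma lives in Section~3 where $M$ is an arbitrary normal martingale independent of $B$; the orthogonality you need still follows from (\ref{normalite}) alone, but the full Fock isomorphism $L^2(\mathbb{P})\simeq\mathrm{Fock}(L^2(\mathbb{R}_+,\mathbb{R}^2))$ need not hold in that generality, so invoke only the orthogonality of the mixed chaos rather than the isomorphism itself.
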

\begin{proof}
By relation (\ref{normalite}) the property is true if  $F$ has a finite expansion on the chaos hence also if  $F\in L^2$.\end{proof} 
Let us denote now  $\mathbb{D}_\mathcal{A}$ the closed vector space generated by the processes  $\int_{\Delta(t)}f_nd^{(n)}B$ with $f_n\in L^2(\mathbb{R}^n)$ for the norm of  $\mathbb{D}$ with values in $L^2(\mathbb{R}_+)$.
\begin{Pro} \label{parpartie}Let be $F\in \mathbb{D}$ and $G\in\mathbb{D}_\mathcal{A}$, we have 
$$
\mathbb{E}[F^\theta\int G^\theta_udY^\theta_u]=\mathbb{E}[\frac{dF^\theta}{d\theta}\int G^\theta_udY_u^{\theta+\pi/2}]$$
so that taking $\theta=0$
$$\mathbb{E}[F\int G_udB_u]=\mathbb{E}[\int D_uFdM_u\int G_udM_u]=\mathbb{E}[\int D_uF\,G_udu].$$
\end{Pro}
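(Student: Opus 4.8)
The identity we must prove is

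$$\mathbb{E}\Big[F^\theta\int G^\theta_u\,dY^\theta_u\Big]=\mathbb{E}\Big[\frac{dF^\theta}{d\theta}\int G^\theta_u\,dY_u^{\theta+\pi/2}\Big].$$

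The natural strategy is to differentiate in $\theta$ a simpler ``diagonal'' identity. First I would establish, for $F\in\mathbb{D}$ and $G\in\mathbb{D}_{\mathcal A}$, the \emph{orthogonality relation}
$$\frac{d}{d\theta}\,\mathbb{E}\Big[F^\theta\int G^\theta_u\,dY^\theta_u\Big]=0,$$
i.e. that $\mathbb{E}[F^\theta\int G^\theta_u\,dY^\theta_u]$ does not depend on $\theta$. Granting this, I then expand the derivative on the left by the product rule: writing $N^\theta=\int G^\theta_u\,dY^\theta_u$, one has $\frac{d}{d\theta}\mathbb{E}[F^\theta N^\theta]=\mathbb{E}[\frac{dF^\theta}{d\theta}N^\theta]+\mathbb{E}[F^\theta\frac{dN^\theta}{d\theta}]$. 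As in the proof of Proposition~\ref{finiteexpansion} and Proposition~\ref{derivee2}, differentiating a multiple integral against $Y^\theta$ produces two terms — one keeping all integrators $Y^\theta$ but differentiating the inside, and one replacing the outermost $dY^\theta_s$ by $dY^{\theta+\pi/2}_s$; summing over which factor is hit, the net effect is that $\frac{dN^\theta}{d\theta}$ equals a term lying in the $Y^\theta$-chaos plus $\int G^\theta_u\,dY_u^{\theta+\pi/2}$. The term of $\frac{dN^\theta}{d\theta}$ that stays in the $C(Y^\theta)$ chaos is itself of the form $\tilde G^\theta$ with $\tilde G\in\mathbb{D}_{\mathcal A}$ (it is obtained from $G$ by differentiating inside, which on finite expansions just redistributes the $Y^\theta$'s and lands back in $\mathcal A$), so by the Lemma its pairing with $F^\theta$ against nothing — rather, $\mathbb{E}[F^\theta\,\tilde G^\theta]$ — need not vanish; here I must be careful. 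More precisely I expect the cleaner route: combine the two product-rule terms so that the $\theta$-derivative of the \emph{diagonal} functional $\mathbb{E}[F^\theta\int G^\theta dY^\theta]$ is $\mathbb{E}[\frac{dF^\theta}{d\theta}\int G^\theta dY^{\theta+\pi/2}] + \mathbb{E}[F^\theta \int G^\theta dY^{\theta+\pi/2}\cdot(\text{extra})]$, and the Lemma (applied with the integrand $G^\theta$, which lies in $\mathcal A$-type chaos) forces all the ``extra'' cross terms against $dY^{\theta+\pi/2}$ to drop, leaving exactly the stated identity.

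So concretely the steps are: (1) reduce to $F$ with finite Brownian chaos expansion and $G=\int_{\Delta(t)}f_n\,d^{(n)}B$ (an elementary generator of $\mathbb{D}_{\mathcal A}$), the general case following by the density/continuity estimates already used in Proposition~\ref{derivee2} — the norm $\|F-F_n\|_{\mathbb{D}}$ and the analogous norm on $G$ in $\mathbb{D}_{\mathcal A}$ control everything uniformly in $\theta$ on a bounded $\theta$-interval; (2) on these generators, compute both sides explicitly using $\langle Y_i,Y_j\rangle_t=\delta_{ij}t$ from (\ref{normalite}), reducing everything to Lebesgue integrals over simplices of products of the $f$'s — this is the same bookkeeping as in the proof of Proposition~\ref{finiteexpansion}; (3) check the two expressions agree, which amounts to the combinatorial identity that differentiating $\int f(A)\,dY^\theta_A\cdot\int g(B)\,dY^\theta_B$ in $\theta$ and keeping only the terms with an odd number of $\pi/2$-shifts reproduces $\int (D f)(A)\,dY^{\theta}_A \cdot \int g(B)\,dY^{\theta+\pi/2}_B$ modulo terms killed by the Lemma; (4) set $\theta=0$, use $Y^0=B$, $Y^{\pi/2}=M$, $\frac{dF^\theta}{d\theta}|_0=\int D_sF\,dM_s$ from Proposition~\ref{derivee2}, and $G^0=G$, to get $\mathbb{E}[F\int G_u\,dB_u]=\mathbb{E}[\int D_uF\,dM_u\int G_u\,dM_u]$; the final equality with $\mathbb{E}[\int D_uF\,G_u\,du]$ is then the isometry for stochastic integrals against the normal martingale $M$ (again $\langle M,M\rangle_t=t$), noting $D_uF$ is adapted and $G$ is built from predictable integrands.

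\textbf{Main obstacle.} The delicate point is step~(3): controlling exactly which cross terms survive when one differentiates the product $F^\theta N^\theta$ in $\theta$. Each factor $dY^\theta$ in either multiple integral can be hit by the derivative, turning into $dY^{\theta+\pi/2}$, and a priori this generates many terms — some with integrators from $F^\theta$ shifted, some from $N^\theta$. One must argue that, after taking expectations, the only non-vanishing contribution is the one matching the right-hand side, all others being of the form $\mathbb{E}[(\text{a }C(Y^\theta)\text{ functional})\cdot\int(\text{an }\mathcal A\text{-type process})\,dY^{\theta+\pi/2}]$ and hence zero by the Lemma. Getting this accounting right uniformly — and justifying the interchange of $\frac{d}{d\theta}$ with the expectation and with the chaos sum, which is where the hypotheses $F\in\mathbb{D}$, $G\in\mathbb{D}_{\mathcal A}$ are genuinely used (they provide the $L^2$-convergence of derivatives uniform in $\theta$, exactly as in Proposition~\ref{derivee1}) — is the technical heart of the argument. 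Everything else is either a density reduction or a direct computation with the bracket relation (\ref{normalite}).
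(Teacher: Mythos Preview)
Your plan has a genuine gap: differentiating the ``diagonal'' quantity $\mathbb{E}[F^\theta\int G^\theta_u\,dY^\theta_u]$ yields a tautology, not the identity. By the product rule the derivative splits as
\[
\mathbb{E}\Big[\tfrac{dF^\theta}{d\theta}\int G^\theta_u\,dY^\theta_u\Big]
+\mathbb{E}\Big[F^\theta\int \tfrac{dG^\theta_u}{d\theta}\,dY^\theta_u\Big]
+\mathbb{E}\Big[F^\theta\int G^\theta_u\,dY^{\theta+\pi/2}_u\Big].
\]
Each of the three terms is separately zero: in every case one factor lies in the pure $Y^\theta$-chaos while the other carries exactly one $dY^{\theta+\pi/2}$ integrator, and $\langle Y^\theta,Y^{\theta+\pi/2}\rangle=0$ kills the pairing. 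So you recover $0=0+0+0$. In particular the term $\mathbb{E}[\tfrac{dF^\theta}{d\theta}\int G^\theta_u\,dY^{\theta+\pi/2}_u]$ that you want on the right-hand side is \emph{not} one of the product-rule pieces --- the first piece has integrator $dY^\theta$, not $dY^{\theta+\pi/2}$ --- and your ``cleaner route'' paragraph silently switches the integrator without justification.

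The paper's device is to differentiate the \emph{off-diagonal} identity of the Lemma,
\[
\mathbb{E}\Big[F^\theta\int G^\theta_u\,dY^{\theta+\pi/2}_u\Big]=0,
\]
in $\theta$. Now the three product-rule pieces are
\[
\mathbb{E}\Big[\tfrac{dF^\theta}{d\theta}\int G^\theta_u\,dY^{\theta+\pi/2}_u\Big]
+\mathbb{E}\Big[F^\theta\int \tfrac{dG^\theta_u}{d\theta}\,dY^{\theta+\pi/2}_u\Big]
+\mathbb{E}\Big[F^\theta\int G^\theta_u\,dY^{\theta+\pi}_u\Big].
\]
The key observation is $Y^{\theta+\pi}=-Y^\theta$, so the third piece is $-\mathbb{E}[F^\theta\int G^\theta_u\,dY^\theta_u]$, exactly the left-hand side of the proposition with a sign. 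The middle piece vanishes by the same orthogonality as in the Lemma (now the second factor carries two $dY^{\theta+\pi/2}$ integrators while $F^\theta$ has none). Rearranging gives the stated equality. Your density reduction in step~(1) and the $\theta=0$ specialization in step~(4) are fine; it is only the object being differentiated that needs to change.
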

\begin{proof} We differentiate  $F^\theta\int G^\theta_tdY_t^{\theta+\pi/2}$ taking in account the lemma and the fact that  $Y^{\theta+\pi}=-Y^\theta$.
\end{proof}

\noindent{\bf Remark 2}. Taking anew $\tilde{N}$ for $M$, we may apply the previous reasoning  at the point $\theta=\pi/2$.  Denoting $D^{(N)}$ the operator of Nualart-Viv\`es \cite{nualart-vives}  which acts on the Poisson chaos as $D$ acts on the Brownian ones, Proposition \ref{derivee2} says that for $(f_n)$ such that $\sum n!n\|f_n\|^2<\infty$  the Poisson functional $F=\sum I_n^{\pi/2}(f_n)$ is such that $\frac{d}{d\theta}F^\theta |_{\theta=\pi/2}=\int D^{(N)}FdB$. 

And by Proposition \ref{parpartie} we obtain that the  finite difference operator $D^{(N)}$ of the Ornstein-Uhlenbeck structure on the Poisson space satisfies an integration by part formula (cf  \O ksendal and al \cite{oksendal} Thm 12.10) despite its non local character. \\

\noindent{\bf Remark 3}. In the case of another standard Brownian motion $\hat{B}$ for $M$, Proposition \ref{derivee2} gives exactly the derivation operator in the sense of Feyel-La Pradelle cf \cite{bouleau-hirsch2} Chap. II \S 2. 
$$\frac{d}{d\theta}F^\theta|_{\theta=0}=F^\prime=\int D_u Fd\hat{B}_{u}$$

In that case the situation is quite different from the one we had in Section \ref{Section1}. Indeed $Y^\theta=B\cos\theta+\hat{B}\sin\theta$ does satisfy the chaotic representation property, so that the space $\{F^\theta : F\in L^2(\mathbb{P}_1)\}$ is $L^2(\mathbb{P}_{Y^\theta})$. It is not possible to measurably detect the paths of $B$ and $\hat{B}$ on those of $Y^\theta$. But the concept of chaotic extension becomes simpler because it is compatible with the composition of the functions. It is valid to write in this case 
$$F^\theta=F(B\cos\theta+\hat{B}\sin\theta).$$
Indeed, it is correct for $F=\Phi(\int h_1dB,\ldots,\int h_kdB)$ with $\Phi$ a polynomial by Ito formula and induction (what was false in the case of the Poisson process), and then for general $F$ in $L^2$ by approximation. As a consequence, Proposition \ref{derivee2} gives a formula of Mehler type without integration for the gradient
\begin{equation}{\label{e7}}\forall F\in\mathbb{D}\qquad F^\prime=\int D_uFd\hat{B}_u=\frac{d}{d\theta}F(B\cos\theta+\hat{B}\sin\theta)|_{\theta=0}\end{equation}
and with integration for the carr\'e du champ
\begin{equation}\label{e8}\Gamma[F]=\hat{\mathbb{E}}[(\frac{d}{d\theta}F(B\cos\theta+\hat{B}\sin\theta))^2|_{\theta=0}]\end{equation} where $\hat{\mathbb{E}}$ acts on $\hat{B}$ as usual. By the change of variable $\cos\theta=e^{-t/2}$ this may be also written in a form similar to Mehler formula:
\begin{equation}\label{e9}F^\prime=\lim_{t\rightarrow 0}\frac{1}{\sqrt{t}}(F(B\sqrt{e^{-t}}+\sqrt{1-e^{-t}}\hat{B})-F(B))
\end{equation} what gives denoting $P_t$ the Ornstein-Uhlenbeck semi-group
\begin{equation}
\Gamma[F]=\lim_{t\rightarrow0}\frac{1}{t}(P_t(F^2)-2FP_tF+F^2)
\end{equation}
well known formula when $F$ and $F^2$ are in the domain of the generator and that we obtain here for $F\in \mathbb{D}$.\\
To our knowledge, formulae \eqref{e7}, \eqref{e8} and \eqref{e9} seem to be new under these hypotheses.\\

\section{Functional calculus of class $\mathcal{C}^1\cap Lip$.}

\begin{Pro} \label{int}Let us suppose that the process  $H_s$ be in $\mathbb{D}_\mathcal{A}$ {\rm(cf proposition \ref{parpartie})} then
$$(\int H_sdB_s)^\theta=\int (H_s)^\theta dY_s^\theta.$$
\end{Pro}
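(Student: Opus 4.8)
The plan is to establish the identity $(\int H_s\,dB_s)^\theta=\int (H_s)^\theta\,dY^\theta_s$ first on a dense class of integrands and then pass to the limit using the norm of $\mathbb{D}_\cA$. The natural dense class is the set of processes of the form $H_s=\int_{\Delta(s)}f_n\,d^{(n)}B$ with $f_n\in L^2(\R^n)$, since by definition $\mathbb{D}_\cA$ is the closure of their linear span for the $\mathbb{D}$-valued-in-$L^2(\R_+)$ norm. For such an $H$, the Brownian stochastic integral $\int H_s\,dB_s$ is, up to the combinatorial normalization, a multiple integral $I_{n+1}(\tilde f_{n+1})$ where $\tilde f_{n+1}$ is (a symmetrization of) $f_n\otimes 1$ restricted to the simplex; this is exactly the iterated-integral description of the chaos used in Proposition~\ref{finiteexpansion}. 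Its chaotic extension is then $I_{n+1}^\theta(\tilde f_{n+1})$, which by the very same iterated-integral formula equals $\int\bigl[\int_{\Delta(s)}f_n\,d^{(n)}Y^\theta\bigr]\,dY^\theta_s$. Since the chaotic extension of the process $H$ is, componentwise on the chaos, $H^\theta_s=\int_{\Delta(s)}f_n\,d^{(n)}Y^\theta$, this is precisely $\int (H_s)^\theta\,dY^\theta_s$, so the identity holds on the dense class.

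The second step is the limiting argument. Take $H\in\mathbb{D}_\cA$ and a sequence $H^{(k)}$ from the dense class with $H^{(k)}\to H$ in the $\mathbb{D}$-valued-in-$L^2(\R_+)$ norm; in particular $H^{(k)}\to H$ in $L^2(\mathbb{P},L^2(\R_+,dt))$. On the left, $\int H^{(k)}_s\,dB_s\to\int H_s\,dB_s$ in $L^2(\mathbb{P}_1)$ by the Itô isometry, and the chaotic extension is an $L^2$-isometry on each chaos (as recalled after the definition of $X^\theta$: $\|I_n(f_n)\|_{L^2(\mathbb{P}_1)}=\|I^\theta_n(f_n)\|_{L^2(\mathbb{P})}$), so $(\int H^{(k)}_s\,dB_s)^\theta\to(\int H_s\,dB_s)^\theta$ in $L^2(\mathbb{P})$. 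On the right one must check that $\int (H^{(k)}_s)^\theta\,dY^\theta_s\to\int (H_s)^\theta\,dY^\theta_s$ in $L^2(\mathbb{P})$; since $Y^\theta$ is a normal martingale, the Itô isometry for $Y^\theta$ gives $\mathbb{E}[(\int(H^{(k)}_s-H_s)^\theta\,dY^\theta_s)^2]=\mathbb{E}\int |(H^{(k)}_s-H_s)^\theta|^2\,ds$, and because the chaotic extension acts isometrically chaos-by-chaos and commutes with the integration $ds$ (Fubini on $\Delta_n\times\R_+$), this equals $\|H^{(k)}-H\|^2_{L^2(\mathbb{P}_1,L^2(\R_+))}\to 0$. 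Both sides of the dense-class identity therefore converge to the two sides of the claimed identity, which proves the proposition.

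The main obstacle I anticipate is bookkeeping rather than anything deep: one has to make the identification between ``the chaotic extension of the process $s\mapsto H_s$'' and ``the process $s\mapsto (H_s)^\theta$'' completely precise, i.e.\ verify that applying the extension $F\mapsto F^\theta$ to the random variable $H_s=\int_{\Delta(s)}f_n\,d^{(n)}B$ for each fixed $s$ yields a process that is jointly measurable in $(\omega,s)$, lies in $L^2(\mathbb{P},L^2(\R_+))$, and whose $L^2$-norm in that space is unchanged — this is where the isometry property (\ref{normalite}) for $(Y^\theta,Y^{\theta+\pi/2})$ and the simplex structure of $dA$ do the work, exactly as in the proof of Proposition~\ref{finiteexpansion}. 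A secondary point is that one uses the normality of $Y^\theta$ twice, once to write the Itô isometry for $\int(H_s)^\theta\,dY^\theta_s$ and once (implicitly, via the remark that $\int H_{s-}\,dX^\theta_s$ and $\int H_s\,dX^\theta_s$ are confounded because $X^\theta$ is normal) to match predictable and non-predictable versions of the integrand; both are granted by the standing assumptions.
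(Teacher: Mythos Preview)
Your proof is correct. Both your argument and the paper's rest on the same observation: the chaotic extension acts chaos-by-chaos, and the iterated-integral structure $\int_0^\infty\bigl[\int_{\Delta_n(s)}f_n\,d^{(n)}B\bigr]dB_s=\int_{\Delta_{n+1}}g_{n+1}\,d^{(n+1)}B$ is preserved when $B$ is replaced by $Y^\theta$. The difference is only one of packaging. The paper avoids your density-plus-limit argument by using the ``short notation'' $F=\int_\cP f(A)\,dB_A$ of Dellacherie: writing $\dot f_t(E)=f(E\cup\{t\})$ for $E\subset[0,t[$ and $g_t=\int_\cP \dot f_t(E)\,dB_E$, the structural identity $F=f(\emptyset)+\int g_t\,dB_t$ holds for the full chaos expansion at once, and transfers verbatim to $Y^\theta$ to give $F^\theta=f(\emptyset)+\int(g_t)^\theta\,dY^\theta_t$. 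Since $F=\int H_s\,dB_s$ forces $f(\emptyset)=0$ and $g_t=H_t$ by uniqueness of the predictable representation, the result follows directly. Your route is a bit more elementary in that it does not invoke that notation, at the cost of the explicit approximation step; the paper's route is more compact but presupposes the machinery of \cite{dellacherie}. The measurability and isometry points you flag in your last paragraph are genuine but routine, exactly as you say.
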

\begin{proof} The functional $F=\int H_sdB_s$ is in $\mathbb{D}$ and has a chaotic expansion  $F=\int_\mathcal{P}f(A)dB_A$. Following the short notation of \cite{dellacherie} (p203) if we put for  $E\in\mathcal{P}$
$$\dot{f}_t(E)=f(E\cup \{t\})\;\mbox{ if }\;E\subset[0,t[, \quad =0\;\mbox{ otherwise},$$
and if  $g_t=\int_\mathcal{P}\dot{f}_t(E)dB_E$, then
$$F=\int_\mathcal{P}f(A)dB_A=f(\emptyset)+\int g_tdB_t.$$ 
Hence we have $F^\theta=\int_\mathcal{P}f(A)dY^\theta_A$ and  $(g_t)^\theta=\int_\mathcal{P}\dot{f}_t(E)dY^\theta_E$ and
$$F^\theta=f(\emptyset)+\int (g_t)^\theta dY^\theta_t,$$
what proves the  proposition.\end{proof}
Let be  $F=(F_1,\ldots,F_k)\in\mathbb{D}^k$ and $\Phi\in\mathcal{C}^1\cap Lip(\mathbb{R}^k,\mathbb{R})$, where in a natural way $Lip (\mathbb{R}^k,\mathbb{R})$ denotes the set of uniformly Lipschitz real-valued functions defined on $\R^k$. It comes from Proposition \ref{derivee2} and from the functional calculus in local Dirichlet structures the following result 
\begin{Pro} \label{comp}When $\theta\rightarrow0$, we have in $L^2(\mathbb{P})$
$$\frac{1}{\theta}[(\Phi(F_1,\ldots,F_k))^\theta-\Phi(F_1^\theta,\ldots,F_k^\theta)]\rightarrow 0.$$
\end{Pro}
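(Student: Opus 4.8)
The strategy is to exploit the fact that $\Phi$ can be approximated by smooth functions with bounded derivatives and that both operations $F \mapsto F^\theta$ and $F \mapsto \Phi(F)$ behave well under such approximation, uniformly in $\theta$. First I would reduce to the case $\Phi \in \mathcal{C}^\infty$ with bounded derivatives of all orders: given $\Phi \in \mathcal{C}^1 \cap Lip$, choose smooth $\Phi_m$ with $\Phi_m \to \Phi$ uniformly on compacts and $\nabla \Phi_m \to \nabla \Phi$ pointwise, with $\|\nabla\Phi_m\|_\infty$ uniformly bounded by the Lipschitz constant of $\Phi$. Then $\Phi_m(F_1,\ldots,F_k) \to \Phi(F_1,\ldots,F_k)$ in $\mathbb{D}$ (standard in local Dirichlet structures, using the functional calculus and dominated convergence for the carré du champ), and likewise $\Phi_m(F_1^\theta,\ldots,F_k^\theta) \to \Phi(F_1^\theta,\ldots,F_k^\theta)$ in $L^2(\mathbb{P})$ uniformly in $\theta$ (since $|\Phi_m - \Phi| \le \omega_m$ on a compact exhausting all but a set of small probability, and the laws of $(F_i^\theta)$ are controlled uniformly in $\theta$ because $\|F_i^\theta\|_{L^2} = \|F_i\|_{L^2}$). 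Using Proposition \ref{derivee1}, the map $F \mapsto \frac{1}{\theta}(F^\theta - F)$ has $L^2(\mathbb{P})$-norm bounded by $\|F\|_{\mathbb{D}}$ (the bound on the difference quotient), which passes the error in $\Phi_m \to \Phi$ through uniformly in $\theta$; so it suffices to prove the statement for smooth $\Phi$.

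For smooth $\Phi$, the key is an It\^o/Taylor expansion. Write $G = \Phi(F_1,\ldots,F_k)$. By Proposition \ref{derivee2} applied to each $F_i$ and to $G$, the limits $\lim_{\theta\to 0}\frac{1}{\theta}(F_i^\theta - F_i) = \int D_s F_i\, dM_s$ and $\lim_{\theta\to 0}\frac{1}{\theta}(G^\theta - G) = \int D_s G\, dM_s$ hold in $L^2(\mathbb{P})$, and by the chain rule in the Dirichlet structure $D_s G = \sum_{i=1}^k \partial_i\Phi(F_1,\ldots,F_k)\, D_s F_i$. On the other hand, a first-order Taylor expansion of $\Phi$ gives
$$\Phi(F_1^\theta,\ldots,F_k^\theta) - \Phi(F_1,\ldots,F_k) = \sum_{i=1}^k \partial_i\Phi(F_1,\ldots,F_k)(F_i^\theta - F_i) + R_\theta,$$
where $R_\theta$ is controlled by the modulus of continuity of $\nabla\Phi$ times $\sum_i |F_i^\theta - F_i|$. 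Dividing by $\theta$: the main term is $\sum_i \partial_i\Phi(F_1,\ldots,F_k)\cdot\frac{1}{\theta}(F_i^\theta - F_i) \to \sum_i \partial_i\Phi(F_1,\ldots,F_k)\int D_s F_i\, dM_s = \int D_s G\, dM_s = \lim_{\theta\to0}\frac{1}{\theta}(G^\theta - G)$ in $L^2(\mathbb{P})$. So $\frac{1}{\theta}[G^\theta - \Phi(F_1^\theta,\ldots,F_k^\theta)] = \frac{1}{\theta}(G^\theta - G) - \frac{1}{\theta}(\Phi(F_1^\theta,\ldots,F_k^\theta) - G) \to 0$, provided $\frac{1}{\theta}R_\theta \to 0$ in $L^2(\mathbb{P})$.

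The main obstacle is precisely the control of the remainder term $\frac{1}{\theta}R_\theta$. Since $|R_\theta| \le \varrho\big(\textstyle\sum_i|F_i^\theta - F_i|\big)\sum_i|F_i^\theta - F_i|$ where $\varrho$ is the (bounded) modulus of continuity of $\nabla\Phi$, one has $\frac{1}{\theta}|R_\theta| \le \varrho\big(\sum_i|F_i^\theta - F_i|\big)\cdot\frac{1}{\theta}\sum_i|F_i^\theta - F_i|$. The factor $\frac{1}{\theta}\sum_i|F_i^\theta - F_i|$ is bounded in $L^2(\mathbb{P})$ uniformly in $\theta$ by $\sum_i\|F_i\|_{\mathbb{D}}$ (Proposition \ref{derivee1}), while $\varrho\big(\sum_i|F_i^\theta - F_i|\big) \to 0$ in probability as $\theta \to 0$ (because $F_i^\theta \to F_i$ in $L^2(\mathbb{P})$, hence a subsequence converges a.s., and $\varrho$ is bounded and vanishes at $0$) and stays bounded. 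A uniform-integrability / dominated-convergence argument — splitting on $\{\varrho(\cdots) > \delta\}$ and using the $L^2$ bound on the other factor together with Cauchy–Schwarz — then gives $\frac{1}{\theta}R_\theta \to 0$ in $L^1(\mathbb{P})$; upgrading to $L^2(\mathbb{P})$ requires an $L^2$ bound on $\varrho(\cdots)$ (automatic since $\varrho$ is bounded) combined with a second application of dominated convergence, or one works throughout in $L^2$ by noting $\frac{1}{\theta}R_\theta$ is bounded in $L^2$ and converges to $0$ in probability, hence in $L^r$ for $r<2$, which is already enough after another pass to the limit. Care is needed to make the "boundedness in $L^2$, convergence in probability $\Rightarrow$ convergence in $L^2$" step honest — one uses that $\frac1\theta|R_\theta|$ is dominated by $C\cdot\frac1\theta\sum_i|F_i^\theta-F_i|$ which, by Proposition \ref{finiteexpansion}/\ref{derivee1}, is even convergent (not merely bounded) in $L^2$, so the family $\{(\frac1\theta R_\theta)^2\}_\theta$ is uniformly integrable, and convergence in probability then yields convergence in $L^2$.
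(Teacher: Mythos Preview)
Your core argument --- the part beginning ``For smooth $\Phi$'' --- is exactly the paper's proof, only spelled out in more detail. The paper simply observes that both
\[
\lim_{\theta\to0}\frac{1}{\theta}\bigl[\Phi(F_1^\theta,\ldots,F_k^\theta)-\Phi(F_1,\ldots,F_k)\bigr]
\quad\text{and}\quad
\lim_{\theta\to0}\frac{1}{\theta}\bigl[(\Phi(F_1,\ldots,F_k))^\theta-\Phi(F_1,\ldots,F_k)\bigr]
\]
exist in $L^2(\mathbb{P})$ and are equal to $\sum_i\Phi'_i(F_1,\ldots,F_k)F_i^\sharp$: the second by Proposition~\ref{derivee2} together with the chain rule $D_sG=\sum_i\partial_i\Phi(F)\,D_sF_i$ valid in the Dirichlet structure for $\Phi\in\mathcal{C}^1\cap Lip$; the first by the Taylor expansion and remainder control that you carry out carefully. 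Subtracting gives the result.

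Your preliminary reduction to $\Phi\in\mathcal{C}^\infty$ is unnecessary and, as sketched, has a gap. The bound $\|\frac{1}{\theta}(H^\theta-H)\|_{L^2}\le\|H\|_{\mathbb{D}}$ (which is correct) controls the error $\frac{1}{\theta}\bigl[((\Phi-\Phi_m)(F))^\theta-(\Phi-\Phi_m)(F)\bigr]$ uniformly in $\theta$, but the remaining error piece
\[
\frac{1}{\theta}\bigl[(\Phi-\Phi_m)(F_1,\ldots,F_k)-(\Phi-\Phi_m)(F_1^\theta,\ldots,F_k^\theta)\bigr]
\]
is not controlled by the mere $L^2$-convergence $\Phi_m(F^\theta)\to\Phi(F^\theta)$ you invoke; uniform-in-$\theta$ smallness would require $\|\nabla\Phi_m-\nabla\Phi\|_\infty\to0$, which is not available for general $\Phi\in\mathcal{C}^1\cap Lip$. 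The detour is not needed, though: your remainder argument already works directly for $\Phi\in\mathcal{C}^1\cap Lip$ if you replace the global modulus of continuity (which may fail to exist) by the integral remainder $R_\theta=\int_0^1[\nabla\Phi(F+t(F^\theta-F))-\nabla\Phi(F)]\cdot(F^\theta-F)\,dt$; the bracket is bounded by $2\|\nabla\Phi\|_\infty$ and tends to $0$ in probability by mere continuity of $\nabla\Phi$, and then your uniform-integrability step (domination by a sequence convergent in $L^2$) goes through verbatim.
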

\begin{proof} We have indeed in the sense of  $L^2$, the function $\Phi$ being Lipschitz and $\mathcal{C}^1$
$$\begin{array}{rl}\lim\frac{1}{\theta}[\Phi(F_1^\theta,\ldots,F_k^\theta)-\Phi(F_1,\ldots,F_k)]&=\sum_{i=1}^k\Phi^\prime_i(F_1,\ldots,F_k)F_i^\sharp\\
&=\lim\frac{1}{\theta}[(\Phi(F_1,\ldots,F_k))^\theta-\Phi(F_1,\ldots,F_k)].
\end{array}$$
\end{proof}
It follows that we may replace   $(\Phi(F_1,\ldots,F_k))^\theta$ by $\Phi(F_1^\theta,\ldots,F_k^\theta)$ in applying the method. 

Let us define an {\it equivalence relation} denoted $\cong$ in the set of functionals in   $L^2(\mathbb{P})$ depending on $\theta$ and differentiable in  $L^2$ at $\theta=0$ by 
$$F(\omega_1,\omega_2,\theta)\cong G(\omega_1,\omega_2,\theta)
\mbox{ if }\left(\frac{d}{d\theta}F|_{\theta=0}=\frac{d}{d\theta}G|_{\theta=0}\mbox{ and }F|_{\theta=0}=G|_{\theta=0}\right).$$

Let us also define a weaker equivalence relation denoted $\simeq$ for functionals in $L^0(\mathbb{P})$ depending on $\theta$ and differentiable in probability at $\theta=0$ by
$$F(\omega_1,\omega_2,\theta)\simeq G(\omega_1,\omega_2,\theta)
\mbox{ if }\left(\frac{d}{d\theta}F|_{\theta=0}=\frac{d}{d\theta}G|_{\theta=0}\mbox{ and }F|_{\theta=0}=G|_{\theta=0}\right)$$ the limits in the derivations being in probability.

\begin{Pro}\label{int2}If $H_t(\theta)\cong K_t(\theta)$ for all $t$ then
$$\int_0^tH_s(\theta)dY^\theta_s\cong\int_0^tG_s(\theta)dY^\theta_s.$$
\end{Pro}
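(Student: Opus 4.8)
The plan is to reduce the statement about stochastic integrals to the two defining conditions of the equivalence relation $\cong$: equality of the integrands at $\theta=0$ and equality of their $\theta$-derivatives at $\theta=0$, and then to show each of these is preserved by the operation $H\mapsto\int_0^t H_s(\theta)\,dY^\theta_s$. (I note in passing that the statement as printed has a typo — the right-hand side should read $\int_0^t K_s(\theta)\,dY^\theta_s$, not $G$.) First I would record what $H_t(\theta)\cong K_t(\theta)$ gives us: for each $t$, $H_t(0)=K_t(0)$ in $L^2(\mathbb{P})$, and $\frac{d}{d\theta}H_t(\theta)|_{\theta=0}=\frac{d}{d\theta}K_t(\theta)|_{\theta=0}$ in $L^2(\mathbb{P})$. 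Setting $D_t:=H_t-K_t$, this says $D_t(0)=0$ and $D_t'(0)=0$ for every $t$, and the goal becomes $\int_0^t D_s(\theta)\,dY^\theta_s\cong 0$, i.e. this integral vanishes at $\theta=0$ together with its $\theta$-derivative at $0$.

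The first condition is immediate: at $\theta=0$ we have $Y^0=B$ and $D_s(0)=0$, so $\int_0^t D_s(0)\,dB_s=0$. For the derivative condition, the key computation is to differentiate $\theta\mapsto\int_0^t D_s(\theta)\,dY^\theta_s$ under the stochastic integral. Here I would mimic exactly the mechanism already used in the proof of Proposition \ref{finiteexpansion} and Proposition \ref{derivee2}: differentiating a stochastic integral against $Y^\theta$ in $\theta$ produces two terms, one where the derivative falls on the integrand, $\int_0^t D_s'(\theta)\,dY^\theta_s$, and one where it falls on the integrator, which by the structure $Y^\theta=B\cos\theta+M\sin\theta$ turns $dY^\theta_s=\cos\theta\,dB_s+\sin\theta\,dM_s$ into $-\sin\theta\,dB_s+\cos\theta\,dM_s = dY^{\theta+\pi/2}_s$, giving $\int_0^t D_s(\theta)\,dY^{\theta+\pi/2}_s$. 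Evaluating at $\theta=0$: the first term is $\int_0^t D_s'(0)\,dB_s=0$ because $D_s'(0)=0$, and the second term is $\int_0^t D_s(0)\,dM_s=0$ because $D_s(0)=0$. Hence $\frac{d}{d\theta}\int_0^t D_s(\theta)\,dY^\theta_s|_{\theta=0}=0$, which together with the first part gives $\int_0^t D_s(\theta)\,dY^\theta_s\cong 0$, and by linearity $\int_0^t H_s(\theta)\,dY^\theta_s\cong\int_0^t K_s(\theta)\,dY^\theta_s$.

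The main obstacle, and the place where care is needed, is justifying that $\theta\mapsto\int_0^t D_s(\theta)\,dY^\theta_s$ is genuinely differentiable in $L^2(\mathbb{P})$ with the derivative given by the two-term formula above — i.e. justifying the interchange of $\frac{d}{d\theta}$ with the stochastic integral. One needs enough regularity/integrability in $\theta$ of the processes $H_s(\theta),K_s(\theta)$ (uniformly in $s\in[0,t]$) for the Itô isometry to control the difference quotients; in the paper's setting this is exactly the kind of uniform-in-$\theta$ convergence of chaos expansions established for elements of $\mathbb{D}$ before Proposition \ref{derivee1}, so the natural route is to first prove the claim for $D$ with finite chaos expansion in each coordinate (where differentiation under the integral is the elementary computation of Proposition \ref{finiteexpansion}), obtain the $\cong 0$ conclusion there, and then pass to the general case by the approximation argument used in the proof of Proposition \ref{derivee2}, controlling both the integral and its derivative uniformly in $\theta$ by the $\mathbb{D}$-norm of the approximation error. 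Since the hypothesis $H_t(\theta)\cong K_t(\theta)$ is stated pointwise in $t$ while the conclusion involves an integral over $t$, one should also be mindful that some joint measurability/integrability in $(s,\theta)$ of the difference quotients is implicitly required — in practice this is inherited from $H,K\in\mathbb{D}_{\mathcal{A}}$-type hypotheses of the sort under which this proposition is applied later.
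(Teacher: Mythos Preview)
Your proof is correct and follows the same route as the paper: check equality at $\theta=0$, then differentiate the stochastic integral in $\theta$ to obtain the two-term formula $\int_0^t H_s'(0)\,dB_s+\int_0^t H_s(0)\,dM_s$ (equivalently, your $\int_0^t D_s'(0)\,dB_s+\int_0^t D_s(0)\,dM_s=0$), and conclude. The paper's own proof is just these two sentences and does not address the interchange of $\frac{d}{d\theta}$ with the stochastic integral at all; your final paragraph on that point, and your catch of the $G/K$ typo, go beyond what the paper provides.
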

\begin{proof}The equality of the value at zero of the two terms is evident, and differentiating the lefthand term in zero gives 
$$\int_0^t\frac{dH_s(\theta)}{d\theta}|_{\theta=0}dB_s+\int_0^tH_s(0)dM_s$$
which is equal to the derivative of the righthand term. \end{proof}
Let us consider a stochastic differential equation (SDE) with   $\mathcal{C}^1\cap Lip$ coefficients with respect to  the argument $x$
\begin{equation}\label{eds}X_t=x+\int_0^t\sigma(s,X_s)dB_s+\int_0^tb(s,X_s)ds.\end{equation}
Let us recall that we are considering chaotic extensions $F\mapsto F^\theta$ with respect to $Y^\theta=B\cos\theta+M\sin\theta$.\\
The following proposition shows that we can calculate the Malliavin gradient of the diffusion by perturbing the Brownian trajectories using an independent normal martingale such as a 
compensated Poisson process.
\begin{Pro}\label{derivEDS}The chaotic extension $X_t^\theta$ of the solution of {\rm(\ref{eds})} is equivalent (relation $\cong$) to the solution $Z_t(\theta)$ of the SDE
\begin{equation}\label{eds2}
Z_t(\theta)=x+\int_0^t\sigma(s,Z_{s}(\theta))dY^\theta_s+\int_0^tb(s,Z_s(\theta))ds.\end{equation}
\end{Pro}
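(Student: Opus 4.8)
The plan is to compare $X^\theta_t$ and $Z_t(\theta)$ through their common Picard approximations, propagating the equivalence $\cong$ along the iteration by means of Propositions \ref{int}, \ref{int2} and \ref{comp}. Set $X^{(0)}_t=x$ and, for $n\geq 0$,
$$X^{(n+1)}_t=x+\int_0^t\sigma(s,X^{(n)}_s)\,dB_s+\int_0^tb(s,X^{(n)}_s)\,ds,$$
and likewise $Z^{(0)}_t(\theta)=x$ and
$$Z^{(n+1)}_t(\theta)=x+\int_0^t\sigma(s,Z^{(n)}_s(\theta))\,dY^\theta_s+\int_0^tb(s,Z^{(n)}_s(\theta))\,ds.$$
Since $\sigma$ and $b$ are $\mathcal{C}^1\cap Lip$, each $X^{(n)}_t$ lies in $\mathbb{D}$, the predictable process $s\mapsto\sigma(s,X^{(n)}_s)$ lies in $\mathbb{D}_\mathcal{A}$, and $X^{(n)}_t\to X_t$, $Z^{(n)}_t(\theta)\to Z_t(\theta)$ in $L^2(\mathbb{P})$ by the usual contraction argument. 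The crux is to prove by induction on $n$ that $(X^{(n)}_t)^\theta\cong Z^{(n)}_t(\theta)$ for every $t$, the case $n=0$ being trivial.

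The induction step combines four ingredients. First, $F\mapsto F^\theta$ is linear and isometric on $L^2(\mathbb{P}_1)$, hence commutes with the Bochner integral, so $(\int_0^t b(s,X^{(n)}_s)\,ds)^\theta=\int_0^t(b(s,X^{(n)}_s))^\theta\,ds$. Second, Proposition \ref{int} gives $(\int_0^t\sigma(s,X^{(n)}_s)\,dB_s)^\theta=\int_0^t(\sigma(s,X^{(n)}_s))^\theta\,dY^\theta_s$. Third, Proposition \ref{comp}, together with the functional calculus of local Dirichlet structures, shows that $\cong$ is stable under composition with $\mathcal{C}^1\cap Lip$ maps; applied to $\sigma(s,\cdot)$ and $b(s,\cdot)$ and to the induction hypothesis $(X^{(n)}_s)^\theta\cong Z^{(n)}_s(\theta)$, it yields $(\sigma(s,X^{(n)}_s))^\theta\cong\sigma(s,Z^{(n)}_s(\theta))$ and $(b(s,X^{(n)}_s))^\theta\cong b(s,Z^{(n)}_s(\theta))$. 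Fourth, Proposition \ref{int2} propagates $\cong$ through $\int_0^t\cdot\,dY^\theta_s$, and the analogous easier statement handles the $ds$ integral. Chaining these identities and equivalences term by term gives $(X^{(n+1)}_t)^\theta\cong x+\int_0^t\sigma(s,Z^{(n)}_s(\theta))\,dY^\theta_s+\int_0^tb(s,Z^{(n)}_s(\theta))\,ds=Z^{(n+1)}_t(\theta)$.

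It then remains to let $n\to\infty$. On the left, because $X^{(n)}_t\to X_t$ in $\mathbb{D}$, Propositions \ref{derivee1} and \ref{derivee2} (the $L^2$ norm of $\tfrac{d}{d\theta}F^\theta$ being $2\mathcal{E}[F]$, independent of $\theta$) give $(X^{(n)}_t)^\theta\to X^\theta_t$ and $\tfrac{d}{d\theta}(X^{(n)}_t)^\theta\to\tfrac{d}{d\theta}X^\theta_t$ in $L^2(\mathbb{P})$ uniformly in $\theta$; in particular values and derivatives at $\theta=0$ converge. On the right one must check that $Z_t(\theta)$ is differentiable in $\theta$ near $0$ and that $\tfrac{d}{d\theta}Z^{(n)}_t(\theta)\to\tfrac{d}{d\theta}Z_t(\theta)$ uniformly in $\theta$ on a neighbourhood of $0$. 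Differentiating the Picard recursion for $Z$ in $\theta$ and using $\tfrac{d}{d\theta}Y^\theta=Y^{\theta+\pi/2}$ produces a recursion for the tangent processes $\dot Z^{(n)}_t(\theta)$ with a uniformly $L^2$-bounded source term $\int_0^t\sigma(s,Z^{(n)}_s(\theta))\,dY^{\theta+\pi/2}_s$ and bounded coefficients $\sigma'$, $b'$; a Burkholder--Davis--Gundy and Gronwall estimate shows this recursion is Cauchy in $n$, uniformly for $\theta$ in a bounded set, and its limit is $\tfrac{d}{d\theta}Z_t(\theta)$. Granting these two passages, at $\theta=0$ we get $X^0_t=Z_t(0)$ (both equal $X_t$) and $\tfrac{d}{d\theta}X^\theta_t|_0=\lim_n\tfrac{d}{d\theta}(X^{(n)}_t)^\theta|_0=\lim_n\tfrac{d}{d\theta}Z^{(n)}_t(\theta)|_0=\tfrac{d}{d\theta}Z_t(\theta)|_0$, i.e. $X^\theta_t\cong Z_t(\theta)$.

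The main obstacle is this last point: obtaining a bound on $\dot Z^{(n)}_t(\theta)$ uniform in $n$ near $\theta=0$, so that differentiation commutes with the Picard limit. One may instead bypass it by checking directly, via Propositions \ref{int}, \ref{int2} and \ref{comp} applied to the equation (\ref{eds}) itself and to (\ref{eds2}), that both $\tfrac{d}{d\theta}X^\theta_t|_0=X^\sharp_t=\int_0^tD_sX_t\,dM_s$ and $\tfrac{d}{d\theta}Z_t(\theta)|_0$ solve the same linear SDE $\dot U_t=\int_0^t\sigma(s,X_s)\,dM_s+\int_0^t\sigma'(s,X_s)\dot U_s\,dB_s+\int_0^tb'(s,X_s)\dot U_s\,ds$, and invoking uniqueness; the identification of $X^\sharp_t$ with its solution is the classical linear equation for the Malliavin derivative of a diffusion, integrated in $s$ against $dM_s$ by stochastic Fubini. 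All remaining steps are bookkeeping of $\cong$ along the iteration, for which Propositions \ref{int}, \ref{int2} and \ref{comp} are exactly tailored.
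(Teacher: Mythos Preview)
Your proof is correct and follows essentially the same route as the paper: Picard iteration, the induction $(X^{(n)}_t)^\theta\cong Z^{(n)}_t(\theta)$ via Propositions \ref{int}, \ref{comp}, \ref{int2}, and passage to the limit using $\mathbb{D}$-convergence on the left. The only difference is in handling the limit on the $Z$ side: the paper goes straight to what you call the ``bypass'' --- it observes that the derivatives $\tfrac{d}{d\theta}Z^{(n)}_t(\theta)$ converge to a solution of the linear tangent SDE in $Z'(\theta)$, and invokes uniqueness of that linear equation --- rather than first attempting the uniform BDG/Gronwall estimate you outline.
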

\begin{proof} Let us denote $(X^n )_{n\in\N}$ (resp. $(Z^n )_{n\in\N}$) the approximating sequence in the  Picard iteration applied to equation satisfied by $X$ (resp. $Z$). 
We have first $X^0_t =Z^0_t =x$ and 
$$X_t^1 =x+\!\int_0^t\sigma(s,x)dB_s+\!\int_0^tb(s,x)ds.$$
 By Propositions  \ref{int} and \ref{comp}
$$(X_t^1)^\theta=\left(x+\!\int_0^t\sigma(s,x)dB_s+\!\int_0^tb(s,x)ds\right)^\theta\cong x+\!\int_0^t\sigma(s,x)dY^\theta_s+\!\int_0^tb(s,x)ds={Z^1_t(\theta)}$$
Then, 
 $$X_t^2=x+\!\int_0^t\sigma(s,X^1_s)dB_s+\!\int_0^tb(s,X^1_s)ds,$$
so that
$$\displaystyle(X_t^2)^\theta\displaystyle\cong x+\!\int_0^t\sigma(s,(X^1_s)^\theta)dY^\theta_s+\!\int_0^1b(s,(X^1_s)^\theta) ds$$
what gives by Propositions \ref{int2} and \ref{comp}:
$$ \displaystyle(X_t^2)^\theta\displaystyle\cong x+\!\int_0^t\sigma(s,Z^1_{s}(\theta))dY^\theta_s+\!\int_0^tb(s,Z^1_s(\theta))ds\\
=Z^2_t(\theta).
$$
By induction, we get easily that for all $n\in\N$, $(X^n_t )^\theta\cong Z^n_t (\theta)$. But we know that  $X^n_t$ converges to $X_t$ as $n$ tend to $+\infty$ not only in  $L^2$ but in $\mathbb{D}$ since the coefficients of the SDE are  Lipschitz (cf \cite{bouleau-hirsch2} Chap IV), and this implies that  $(X^n_t)^\theta$ converges to $X^\theta_t$ and that $\frac{d}{d\theta}(X^n_t)^\theta$ converges to $\frac{d}{d\theta}X_t^\theta$. 

Now,  $Z^n_t(\theta)$ converges to $Z_t(\theta)$. and its derivative  converges to a solution of  $$Z_t^\prime(\theta)=\int_0^t\sigma^\prime_x(s,Z_{s}(\theta))Z^\prime_s(\theta)dY^\theta_s+\int_0^t\sigma(s,Z_{s}(\theta))dY^{\theta+\pi/2}_s+\int_0^tb^\prime_x(s,Z_s(\theta))Z_s^\prime(\theta)ds$$ equation which has a unique  explicit solution  as linear equation  in $Z^\prime(\theta)$ which is the derivative of  $Z(\theta)$. That proves the  proposition. \end{proof}

\section{ The unit jump on the interval $[0,1]$.}

 In order to express the preceding results on $[0,1]$ with a single jump, we propose two different approaches.\\

\subsection{First approach}

We come back  to the case $M=\tilde{N}$ and  to express the preceding results on the interval   $[0,1]$,  we are conditioning  by $\{N_1=1\}$. This amounts to reasoning on  $\Omega_1\times\{N_1=1\}$ under the measure  $\mathbb{P}=\mathbb{P}_1\times\mathbb{P}_2$ which gives it the mass $e^{-1}$. Then the unique jump is uniformly distributed on  $[0,1]$. For a functional  $F\in L^2$ with expansion $F=\sum_n I_n(f_n)$, the expansion of the chaotic extension  $F^\theta=\sum_nI_n^\theta(f_n)$ considered on the event  $\{N_1=1\}$ is the same sum of stochastic integrals but with integrant the semi-martingale  $V_t(\theta)=B_t\cos\theta+(1_{\{t\geq U\}}-t)\sin\theta$, in other words $$F^\theta=\int_\mathcal{P}f(A)dV_A(\theta)\;\mbox{ on }\; \{N_1=1\}.$$

In the sequel, we use the fact that the notation 
$$\int_\mathcal{P}f(A)dS_A$$
makes sense for any semi-martingale $S$ which admits the decomposition
$$S_t =M_t + L_t,$$
where $M$ is a local martingale whose skew bracket is absolutely continuous w.r.t. the Lebesgue measure and $L$ an absolutely 
continuous process.\\
For example if $U$ is uniform on $[0,1]$, then 
$$1_{\{ t\geq U\}}= M_t -\log (1-t\wedge U),$$
with $M_t =1_{\{ t\geq U\}}+\log (1-t\wedge U)$ and $\langle M,M\rangle_t =-\log (1-t\wedge U)$.\\
By absolutely continuous change of probability measure we may remove the term in $-t\sin\theta$:
 \begin{Pro}\label{01}{\it Let $U$ be uniform on $[0,1]$ independent of $B$. We put  \break $R_t(\theta)=B_t\cos\theta+1_{\{t\geq U\}}\sin\theta$. Let be $F\in \mathbb{D}$, $F=\int_\mathcal{P}f(A)dB_A$.  we have
 $$\lim_{\theta\rightarrow0}\frac{1}{\theta}(\int_\mathcal{P}f(A)dR_A(\theta)-F)=D_UF\qquad\mbox{in probability}.$$
 }\end{Pro}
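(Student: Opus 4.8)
The plan is to reduce the statement to the already established formula $\frac{d}{d\theta}F^\theta|_{\theta=0}=\int D_sF\,dM_s$ (Proposition \ref{derivee2}) together with the conditioning construction preceding the proposition. First I would recall that, on the event $\{N_1=1\}$ under $\mathbb{P}$, the chaotic extension $F^\theta$ coincides with $\int_\mathcal{P}f(A)\,dV_A(\theta)$, where $V_t(\theta)=B_t\cos\theta+(1_{\{t\geq U\}}-t)\sin\theta$, and $U$ is the (uniform on $[0,1]$) location of the unique jump. By Proposition \ref{derivee2} applied with $M=\tilde N$ and then restricted to this event, $\frac{1}{\theta}(\int_\mathcal{P}f(A)\,dV_A(\theta)-F)$ converges in $L^2$ on $\{N_1=1\}$, hence in probability, to $\frac{d}{d\theta}F^\theta|_{\theta=0}$ restricted to $\{N_1=1\}$; and on $\{N_1=1\}$ the martingale $M=\tilde N$ has its single jump at $U$, so $\int D_sF\,d\tilde N_s = D_UF - \int_0^1 D_sF\,ds$ there. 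The remaining task is to pass from $V_t(\theta)$ to $R_t(\theta)=B_t\cos\theta+1_{\{t\geq U\}}\sin\theta$, i.e. to show the deterministic-drift term $-t\sin\theta$ may be removed at the level of the derivative at $\theta=0$.

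The key step is the absolutely continuous change of probability measure alluded to just before the statement. I would use the decomposition $1_{\{t\geq U\}}=M_t-\log(1-t\wedge U)$ with $\langle M,M\rangle_t=-\log(1-t\wedge U)$, so that $R_t(\theta)$ is itself a semimartingale of the admissible type $(\text{martingale part})+(\text{absolutely continuous part})$ for which the iterated integral $\int_\mathcal{P}f(A)\,dR_A(\theta)$ is well defined. The point is that $R_t(\theta)$ and $V_t(\theta)$ differ only by the absolutely continuous process $t\sin\theta$ in the drift; equivalently, writing things in terms of a Girsanov-type exponential, there is a density $\mathcal{E}(\theta)$ (an exponential vector in the variable driving the jump) relating the law under which $1_{\{t\geq U\}}-t$ is the natural compensated object to the law under which $U$ is simply uniform. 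Because $\mathcal{E}(\theta)\to1$ and $\frac{d}{d\theta}\mathcal{E}(\theta)|_{\theta=0}$ is explicit and integrable, multiplying $\frac1\theta(\int_\mathcal{P}f(A)\,dR_A(\theta)-F)$ by the density and differentiating at $\theta=0$ shows that the drift correction contributes exactly the term $\int_0^1 D_sF\,ds$ — which cancels the analogous term coming from $\int D_sF\,d\tilde N_s$ on $\{N_1=1\}$ — leaving the limit equal to $D_UF$.

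Concretely, the steps in order: (i) record that $\int_\mathcal{P}f(A)\,dR_A(\theta)$ makes sense because $R_t(\theta)$ is a semimartingale with absolutely continuous bracket and absolutely continuous drift; (ii) express, on $\{N_1=1\}$, the difference $\int_\mathcal{P}f(A)\,dR_A(\theta)-\int_\mathcal{P}f(A)\,dV_A(\theta)$ either via the change of measure or by directly comparing the two iterated-integral expansions term by term, and show $\frac1\theta$ times this difference converges in probability as $\theta\to0$ to $\int_0^1 D_sF\,ds$; (iii) invoke Proposition \ref{derivee2} (restricted to $\{N_1=1\}$) to get that $\frac1\theta(\int_\mathcal{P}f(A)\,dV_A(\theta)-F)\to D_UF-\int_0^1 D_sF\,ds$ in probability; (iv) add (ii) and (iii). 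I expect the main obstacle to be step (ii): making rigorous the claim that differentiating the change of density (or the termwise difference of the two multiple-integral expansions built on $dB$, $d1_{\{t\geq U\}}$ versus $dB$, $d(1_{\{t\geq U\}}-t)$) at $\theta=0$ produces precisely $\int_0^1 D_sF\,ds$, uniformly enough in $\theta$ to pass to the limit; this requires a careful bound on the exponential density and its $\theta$-derivative, and an approximation by functionals with finite chaos expansion as in the proof of Proposition \ref{derivee2}.
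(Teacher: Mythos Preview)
Your overall architecture coincides with the paper's: condition on $\{N_1=1\}$, invoke Proposition~\ref{derivee2} to obtain
\[
\lim_{\theta\to0}\frac1\theta\Bigl(\int_\mathcal{P}f(A)\,dV_A(\theta)-F\Bigr)=D_UF-\int_0^1 D_sF\,ds
\]
in probability, and then remove the drift $-t\sin\theta$ by an absolutely continuous change of measure so that the extra term $\int_0^1 D_sF\,ds$ cancels. So steps (i), (iii), (iv) are exactly what the paper does.

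The divergence is in step (ii), and here your description is off target. You locate the Girsanov density ``in the variable driving the jump''; the paper instead moves the drift to the \emph{Brownian} side. The point is that $j_t=t$ lies in the Cameron--Martin space, so $B_t\cos\theta-t\sin\theta$ is (up to $O(\theta^2)$) a Brownian motion under the measure with density $\exp(-B_1\sin\theta-\tfrac12\sin^2\theta)$; under this shift $V(\theta)$ becomes $R(\theta)$ and $F(B)$ becomes $F(B+j\sin\theta)$. The term $\int_0^1 D_sF\,ds$ then appears for free as the Cameron--Martin directional derivative $\langle DF,\dot j\rangle$, since $\frac1\theta[F(B+j\sin\theta)-F(B)]\to\int_0^1 D_sF\,ds$ in probability. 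A change of law on the jump side, by contrast, is awkward once you have already conditioned on $\{N_1=1\}$ (the compensator of $1_{\{t\ge U\}}$ under that conditioning is $-\log(1-t\wedge U)$, not $t$), and it is not clear how your proposed density would produce exactly $\int_0^1 D_sF\,ds$.

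There is also a technical point you do not mention: after the Cameron--Martin shift the quantities $D_UF(B+j\sin\theta)$ and $\int_0^1 D_sF(B+j\sin\theta)\,ds$ must be shown to converge in probability to their values at $\theta=0$. Since $D_UF$ is only in $L^2$ (not in $L^p$ for some $p>2$), this requires a separate lemma to the effect that $G(B+t\xi)\to G(B)$ in probability for any $G\in L^0$ and any Cameron--Martin $\xi$, which the paper states and proves just before the argument. Without it the passage to the limit in probability is not justified.
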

 \noindent For the proof let us state the 
 \begin{Le} Let be  $ \xi$ be an element in the Cameron-Martin space.\\
  a) If $F\in L^p$ for $p>2$ then  
 $$\lim_{t\rightarrow 0}\mathbb{E}[(F(B+t\xi)-F(B))^2]=0.$$
 b) If $F\in L^0$ then $F(B+t\xi)$ converges to $F$ in probability as $t$ tends to $0$.
 \end{Le}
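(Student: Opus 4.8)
The plan is to reduce everything to a dense class of smooth functionals by means of the Cameron--Martin quasi-invariance of Wiener measure. Write $\xi_t=\int_0^t\dot\xi_s\,ds$ with $\dot\xi\in L^2(\R_+)$, put $\|\xi\|_H^2=\int_0^\infty\dot\xi_s^2\,ds$, and let $\Lambda_t=\exp\big(t\int\dot\xi\,dB-\frac{t^2}{2}\|\xi\|_H^2\big)$ be the Radon--Nikodym density of the law of $B+t\xi$ with respect to that of $B$, so that $\mathbb{E}[\Psi(B+t\xi)]=\mathbb{E}[\Psi(B)\Lambda_t]$ for every nonnegative measurable $\Psi$; in particular $\omega\mapsto\omega+t\xi$ preserves the null sets, so $F(B+t\xi)$ is a well-defined element of $L^0(\mathbb{P}_1)$ whenever $F\in L^0(\mathbb{P}_1)$. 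Since $\Lambda_t$ is log-normal, $\mathbb{E}[\Lambda_t^q]=\exp\big(\frac{q^2-q}{2}t^2\|\xi\|_H^2\big)$, hence for each $q\geq1$ we have $C_q:=\sup_{|t|\leq1}\mathbb{E}[\Lambda_t^q]<\infty$. This uniform bound on the density is the one real ingredient of the argument, and the restriction $p>2$ in a) is exactly what lets us use it.

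For a): the statement is immediate when $F=G$ is a bounded continuous cylindrical functional, say $G=\phi(\int h_1\,dB,\ldots,\int h_k\,dB)$ with $\phi$ continuous and bounded and $h_i$ smooth with compact support, because $\int h_i\,d(B+t\xi)=\int h_i\,dB+t\int h_i(s)\dot\xi_s\,ds$, so $G(B+t\xi)\to G(B)$ pointwise as $t\to0$ and bounded convergence gives the limit in $L^2$. Such functionals are dense in $L^p(\mathbb{P}_1)$. So, given $F\in L^p$ and $\varepsilon>0$, I would choose such a $G$ with $\|F-G\|_{L^p}<\varepsilon$ and split
$$\|F(B+t\xi)-F(B)\|_{L^2}\leq\|(F-G)(B+t\xi)\|_{L^2}+\|(G-F)(B)\|_{L^2}+\|G(B+t\xi)-G(B)\|_{L^2}.$$
The second term is $\leq\|F-G\|_{L^p}<\varepsilon$ (probability space), the third tends to $0$ as $t\to0$ with $G$ fixed, and for the first, quasi-invariance together with Hölder's inequality with exponents $p/2$ and $r:=p/(p-2)<\infty$ gives, for $|t|\leq1$,
$$\mathbb{E}\big[(F-G)^2(B)\,\Lambda_t\big]\leq\|F-G\|_{L^p}^2\,\big(\mathbb{E}[\Lambda_t^{r}]\big)^{1/r}\leq C_r^{1/r}\,\varepsilon^2.$$
Letting $t\to0$ and then $\varepsilon\to0$ yields a); note it is precisely finiteness of $r=p/(p-2)$, i.e. $p>2$, that is used.

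For b): I would truncate. Given $\eta>0$, fix any $q>1$ and choose $R$ so large that $\mathbb{P}(|F|>R)<\eta$ and $C_q^{1/q}\,\mathbb{P}(|F|>R)^{1-1/q}<\eta$ (possible since $\mathbb{P}(|F|>R)\to0$). Let $F_R=(F\vee(-R))\wedge R$, which is bounded, hence in $L^p$ for every $p$, so a) applies and $F_R(B+t\xi)\to F_R(B)$ in $L^2$, hence in probability. Since $\{F(B+t\xi)\neq F_R(B+t\xi)\}\subseteq\{|F(B+t\xi)|>R\}$ and, by quasi-invariance and Hölder, $\mathbb{P}(|F(B+t\xi)|>R)=\mathbb{E}[\1_{\{|F(B)|>R\}}\Lambda_t]\leq C_q^{1/q}\,\mathbb{P}(|F|>R)^{1-1/q}<\eta$ for all $|t|\leq1$, and likewise $\mathbb{P}(F_R(B)\neq F(B))<\eta$, we get for every $\epsilon>0$ and $|t|\leq1$
$$\mathbb{P}\big(|F(B+t\xi)-F(B)|>\epsilon\big)\leq2\eta+\mathbb{P}\big(|F_R(B+t\xi)-F_R(B)|>\epsilon\big).$$
Letting $t\to0$ and then $\eta\to0$ proves b). The only delicate point, and it is a mild one, is arranging the two approximations (by bounded cylindrical functionals in $L^p$ and by truncations) so that the contribution of the shifted argument is controlled uniformly in $t$ near $0$; this is where the uniform $L^q$-boundedness of $\Lambda_t$ enters and where the hypothesis $p>2$ becomes essential rather than cosmetic.
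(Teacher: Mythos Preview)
Your proof is correct and uses the same essential tool as the paper (Cameron--Martin quasi-invariance and the uniform $L^q$-integrability of the density $\Lambda_t$ for $|t|\leq1$), but the execution for part a) differs. The paper does not approximate $F$: it expands the square directly as $\mathbb{E}[F(B+t\xi)^2]-2\mathbb{E}[F(B+t\xi)F(B)]+\mathbb{E}[F(B)^2]$, rewrites the first term as $\mathbb{E}[\Lambda_t F^2]$, and argues that $F\in L^p$ with $p>2$ makes the family $\{\Lambda_t F^2\}_t$ uniformly integrable, so this term tends to $\mathbb{E}[F^2]$; the cross term is handled first for $G$ bounded continuous and then by reduction. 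Your route instead fixes a cylindrical approximation $G$ in $L^p$ and uses H\"older with exponents $p/2$ and $p/(p-2)$ to bound $\|(F-G)(B+t\xi)\|_{L^2}$ uniformly in $t$. Your argument is a bit more explicit about where the exponent $p>2$ enters (finiteness of the conjugate $r=p/(p-2)$), while the paper's is more compact. For part b) the two arguments coincide: truncate, apply a), and control the probability of the shifted bad set uniformly in $t$ --- the paper phrases this as ``uniform integrability'' where you spell out the H\"older bound, but it is the same step.
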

\begin{proof}
 a) We develop the square. The first term is 
 $$\mathbb{E}\left[exp[t\int\dot{\xi}dB-\frac{t^2}{2}\|\dot{\xi}\|^2]F^2\right]$$
 as $F\in L^p$ $p>2$ it is uniformly integrable and it converges to $\mathbb{E}F^2$.
 
 For the rectangle term, it is easily seen by change of probability measure that
  $$\mathbb{E}[F(B+t\xi)G(B)]$$ converges to $\mathbb{E}[FG]$ for $G$ bounded and continuous.
 
 And we can reduce to this case by the above argument.
 
 b) We truncate  $F$. If $A_n=\{B : |F|\geq n\}$ by uniform int\'egrability we can find  $n$ such that the probability of  $A_n(B+t\xi)$ be $\leq \varepsilon$ for all $t$. The result comes now from part a).\end{proof}
 
 \noindent{\bf proof of  proposition \ref{01} :}
 
   Puting $C=\{N_1=1\}$, we are working under the probability measure  $\mathbb{Q}=e\times\mathbb{P}_1\times (\mathbb{P}_2|_C)$.

 The conditioning explained above yields the following relation in probability
\begin{equation}\label{cond}\lim_{\theta\rightarrow0}\frac{1}{\theta}(\int_\mathcal{P}f(A)dV_A(\theta)-F)=D_UF-\int_0^1D_sFds,\end{equation}
whose second member is   $\int_0^1D_sFd\tilde{N}_s$ restricted to   $\{N_1=1\}$. In order to have 
  $$\lim_{\theta\rightarrow0}\frac{1}{\theta}(\int_\mathcal{P}f(A)dR_A(\theta)-F)=D_UF\qquad\mbox{in probability},$$ we use that the identity map $j$ is a   Cameron-Martin function and that  $\int_0^1D_sFds=\langle DF,\frac{dj}{ds}\rangle_{L^2(ds)}$.

If we change of measure and take   $\exp(-B_1\sin\theta-\frac{\sin^2\theta}{2}).\mathbb{Q}$ relation (\ref{cond}) writes saying that, for all $\varepsilon>0$, 
$$\mathbb{Q}\left[\exp(-B_1\sin\theta-\frac{\sin^2\theta}{2})1_{C_\theta^\varepsilon}\right]$$
tends to zero, where we denote
$$C_\theta^\varepsilon=\{|\lim_{\theta\rightarrow0}\frac{1}{\theta}[\int_\mathcal{P}f(A)dR_A(\theta)-F(B+j\sin\theta)]\hspace{2cm}\qquad\qquad$$
$$\qquad\hspace{2cm}\qquad-D_UF(B+j\sin\theta)-\int_0^1D_sF(B+j\sin\theta)ds|\geq\varepsilon\}.$$

1) Let us observe that  
$$\mathbb{Q}\left[(\exp(-B_1\sin\theta-\frac{\sin^2\theta}{2})-1)1_{C_\theta^\varepsilon}\right]$$
tends to zero. What reduces to study  
$\mathbb{Q}[C_\theta^\varepsilon].$

2) We know that under  $\mathbb{Q}$,
$\frac{1}{\theta}[F(B+j\sin\theta)-F(B)]$ converges in probability to $\int_0^1D_sFds$. The other two terms are processed by the lemma. 

We obtain indeed that under $\mathbb{Q}$, $\frac{1}{\theta}[\int_\mathcal{P}f(A)d(B\cos\theta+1_{\{.\geq U\}}\sin\theta)_A-F]$ converges in probability to $D_UF$.
 \hfill$\Box$\\
 
If we are conditioning by the event $\{N_1=1\}$ the result of  Proposition \ref{derivEDS}, the equation satisfied by  $Z(\theta)$ may be written
$$\label{eds2}\hspace{-1cm}
Z_t(\theta)\displaystyle=x+\int_0^t\sigma(s,Z_{s}(\theta))d(B_s\cos\theta+(1_{\{s\geq U\}}-s)\sin\theta)+\int_0^tb(s,Z_s(\theta))ds$$
As in the proof of proposition   \ref{01} an absolutely continuous change of probability measure allows to remove the term in  $-s\sin\theta$ if we replace the result  $D_UZ_t-\int_0^tD_sZ_tds$ by $D_UZ_t$.

This change being done, the value at  $\theta=0$ and the derivative at $\theta=0$ of $Z(\theta)$ are the same as those of  $\eta(\theta)$ solution of the SDE
$$\eta_t(\theta)=x+\int_0^t\sigma(s,\eta_{s}(\theta))(dB_s+1_{\{s\geq U\}}\theta)+\int_0^tb(s,\eta_s(\theta))ds.$$ In other words we obtain the following result which might have been easily directly verified
\begin{Pro}\label{fppbrownien} The gradient of Malliavin $D_uX_t^x$ of the solution of the SDE
$$X_t^x=x+\int_0^t\sigma(s,X_s^x)dB_s+\int_0^tb(s,X_s^x)ds$$ may be computed by considering the solution of the equation $$X_t^x(\theta)=x+\int_0^t\sigma(s,X_s^x(\theta))d(B_s+\theta1_{\{s\geq u\}})+\int_0^tb(s,X_s^x(\theta))ds$$
and taking the derivative in  $\theta$ at $\theta=0$.
\end{Pro}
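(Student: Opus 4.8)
The plan is to reduce the statement to material already established, namely Propositions \ref{derivEDS} and \ref{01}, and the absolutely continuous change of probability measure used in the discussion preceding the statement. First I would fix $u\in[0,1]$ and condition by the event $\{N_1=1\}$, so that the single jump of $\tilde N$ on $[0,1]$ is uniformly distributed; more directly, as in Proposition \ref{01}, I would work with a uniform random variable $U$ on $[0,1]$ independent of $B$ and take $M=\tilde N$ in Proposition \ref{derivEDS}. By that proposition the chaotic extension $X_t^\theta$ of the solution of the SDE is $\cong$-equivalent to the solution $Z_t(\theta)$ of \eqref{eds2} with $Y^\theta=B\cos\theta+\tilde N\sin\theta$, and restricted to $\{N_1=1\}$ the driving term becomes $B_s\cos\theta+(1_{\{s\geq U\}}-s)\sin\theta$.

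The second step is the change of probability measure: as explained just before the statement, multiplying $\mathbb{Q}$ by $\exp(-B_1\sin\theta-\tfrac{\sin^2\theta}{2})$ (equivalently, using that $s\mapsto s$, i.e. the identity map $j$, lies in the Cameron-Martin space and that $\int_0^tD_sZ_t\,ds=\langle DZ_t,\dot j\rangle_{L^2(ds)}$) removes the drift term $-s\sin\theta$ at the cost of replacing the limit $D_UZ_t-\int_0^tD_sZ_t\,ds$ by $D_UZ_t$. This is exactly the manipulation carried out in the two-step argument (items 1) and 2)) of the proof of Proposition \ref{01}, applied here to $Z_t$ in place of $F$; the lemma preceding that proof handles the convergence of the shifted terms $D_UZ_t(B+j\sin\theta)$ in probability.

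The third step is to identify the resulting equation with a genuinely perturbed SDE. After the change of measure, the value at $\theta=0$ and the $\theta$-derivative at $\theta=0$ of $Z_t(\theta)$ coincide with those of $\eta_t(\theta)$ solving
$$\eta_t(\theta)=x+\int_0^t\sigma(s,\eta_{s}(\theta))\,(dB_s+\theta\,1_{\{s\geq U\}})+\int_0^tb(s,\eta_s(\theta))\,ds,$$
because replacing $\cos\theta$ by $1$ and $\sin\theta$ by $\theta$ in the martingale part alters neither the value nor the first derivative at $\theta=0$ (here one invokes Proposition \ref{int2} together with the differentiability of $\eta(\theta)$, which follows as in Proposition \ref{derivEDS} since the derivative solves an explicit linear SDE). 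Since $\tfrac{d}{d\theta}X_t^\theta|_{\theta=0}=\int D_sX_t\,dM_s$ by Proposition \ref{derivee2}, and under the conditioning/change of measure this expression collapses to $D_uX_t^x$ with $u=U$, one concludes that $D_uX_t^x$ is obtained by differentiating $\eta_t(\theta)=X_t^x(\theta)$ in $\theta$ at $0$.

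The main obstacle is the careful bookkeeping in the change of measure: one must check that all the limits in probability survive the multiplication by the density $\exp(-B_1\sin\theta-\tfrac{\sin^2\theta}{2})\to1$ and the Cameron-Martin shift $B\mapsto B+j\sin\theta$ simultaneously, uniformly enough in $\theta$, so that subtracting the spurious term $\int_0^tD_sX_t\,ds$ is legitimate. This is precisely the content of the two-step argument in the proof of Proposition \ref{01}, so the work here is to verify that $Z_t$ (equivalently $\eta_t$), which lies in $\mathbb{D}$ because the SDE coefficients are Lipschitz and $\mathcal{C}^1$, satisfies the hypotheses of the lemma preceding Proposition \ref{01}; everything else is a direct transcription. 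As noted in the statement, once this framework is in place the identity could also be checked directly by differentiating the perturbed SDE and matching it with the classical equation for $D_uX_t^x$.
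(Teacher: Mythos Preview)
Your proposal is correct and follows essentially the same route as the paper: condition by $\{N_1=1\}$ to reduce the driving martingale in Proposition~\ref{derivEDS} to $B\cos\theta+(1_{\{\cdot\geq U\}}-\cdot)\sin\theta$, apply the Cameron--Martin change of measure from the proof of Proposition~\ref{01} to remove the $-s\sin\theta$ drift (trading $D_UZ_t-\int_0^tD_sZ_t\,ds$ for $D_UZ_t$), and then observe that replacing $(\cos\theta,\sin\theta)$ by $(1,\theta)$ leaves the value and the derivative at $\theta=0$ unchanged, yielding the perturbed equation for $\eta_t(\theta)=X_t^x(\theta)$. The paper also remarks, as you do, that the result can be verified directly by differentiating the perturbed SDE.
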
 Let us remark that since $u$ is defined $du$-almost surely, we may in the equation defining   $X_t^x(\theta)$ put either $\sigma(s,X_s^x(\theta))$ or $\sigma(s,X_{s-}^x(\theta))$.

Let us perform the calculation suggested in the proposition. That gives for $u<t$
$$X_t^x(\theta) =x + \int_0^u \sigma(s,X_s^x)dB_s +\theta\sigma(u,X_u^x)+\int_u^t\sigma(s,X_s^x(\theta)dB_s
+\int_0^ub(s,X_s^x(\theta))ds
$$ $$+\int_u^tb(s,X_s^x(\theta))ds$$
\begin{equation}\label{sept}X_t^x(\theta)=X_u^x+\theta\sigma(u,X_u^x)+\int_u^t\sigma(s,X_s^x(\theta))dB_s+\int_u^tb(s,X_s^x(\theta))ds.\end{equation}
Hence, denoting $Y^x_t=1+\int_0^t\sigma^\prime_X(s,X^x_s)Y^x_sdB_s+\int_0^tb^\prime_X(s,X^x_s)Y^x_sds\; $ and
\begin{equation}\label{huit}X_{u,t}^y=y+\int_u^t\sigma(s,X_{u,s}^y)dB_s+\int_u^tb(s,X_{u,s}^y)ds\end{equation} we see by comparing (\ref{sept}) and  (\ref{huit}) that 
\begin{equation}\label{compo}X_t^x(\theta)=X_{u,t}^{(X_u^x+\theta\sigma(u,X_u^x))}\quad\mbox{et}\quad X_t^x=X_{u,t}^{X_u^x} \end{equation}
 so, derivating (\ref{sept}) with respect to $\theta$ and (\ref{huit}) with respect to $y$, and then derivating the second relation of (\ref{compo}) with respect to $x$
\begin{equation}D_uX_t^x=\sigma(u,X_u^x)\left[\frac{dX_{u,t}^y}{dy}|_{y=X_u^x}\right]=\sigma(u,X_u^x)\frac{Y^x_t}{Y^x_u}.
\end{equation}
This is a fast way of obtaining this classical result  ({transfert principle by the flow} of Malliavin cf \cite{malliavin} Chap VIII). 

Proposition \ref{fppbrownien} is the lent particle formula for the Brownian motion. 

We see that the method of proof allows to obtain this formula without  sinus nor cosinus for general  $F$ in $\mathbb{D}$ provided that we be able to find a functional regular in  $\theta$ equivalent to the chaotic extension of  $F$. In particular the example of the introduction generalises in the following way : if 
$$F=\Phi\left(\int_{s_1<\cdots<s_{k_1}}f_{k_1}dB_{s_1}\cdots dB_{s_{k_1}},\ldots,
\int_{s_1<\cdots<s_{k_n}}f_{k_n}dB_{s_1}\cdots dB_{s_{k_n}}\right)$$
with $f_{k_i}\in L^2(\lambda_{k_i})$ and $\Phi$ of class $\mathcal{C}^1\cap Lip$, we have
$$\lim\frac{1}{\theta}\left[\Phi\left(\int_{s_1<\cdots<s_{k_1}}f_{k_1}d(B_{s_1}+\theta1_{\{s\geq u})\cdots d(B_{s_{k_1}}+\theta1_{\{s\geq u}),\ldots\right)-F\right]=D_uF.
$$ The limit is in probability as in Proposition \ref{01}.\\

{\subsection{Second approach }
 
  Instead of performing a change of probability measure to remove the term $-t\sin\theta$ as in the previous approach, we consider $M$ a L\'evy process with L\'evy measure $\frac12 (\delta_{-1}(dx)+\delta_1 (dx))$ in place of $\tilde{N}$. Let us remark that we might have considered any Lévy process whose Lévy measure has mean $0$ and variance $1$. $M$ can be expressed as 
  $$\forall t\geq 0 ,\ M_t =\sum_{n=1}^{N_t} J_n,$$
  where $N$ is a Poisson process with intensity $1$ and $(J_n )_n$ a sequence of i.i.d. variables, independent of $N$ such that 
  $$\mathbb{P}_2 (J_1 =1)=\mathbb{P}_2 (J_1 =-1)=\frac12 .$$ 
  \begin{Pro}\label{Cond}{\it Let $U$ be uniform on $[0,1]$ independent of $B$. We put  \break $R_t(\theta)=B_t\cos\theta+1_{\{t\geq U\}}\sin\theta$. Let be $F\in \mathbb{D}$, $F=\int_\mathcal{P}f(A)dB_A$.  we have
 $$\lim_{\theta\rightarrow0}\frac{1}{\theta}(\int_\mathcal{P}f(A)dR_A(\theta)-F)=D_UF\qquad\mbox{in}\  L^2 (\bbP ).$$
 }\end{Pro}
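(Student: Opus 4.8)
The plan is to condition on the event $C=\{N_1=1\}$ exactly as in the first approach, but now exploiting the fact that the Lévy measure $\frac12(\delta_{-1}+\delta_1)$ is centered, so the compensator of $M$ on $[0,t]$ is zero and there is no drift term $-t\sin\theta$ to remove. Concretely, under $\mathbb{P}_2$ conditioned on $C$ the process $M$ restricted to $[0,1]$ is $J\,1_{\{t\geq U\}}$ with $U$ uniform on $[0,1]$ and $J$ a symmetric $\pm1$ variable independent of $U$; since $M$ has zero compensator, $Y^\theta_t=B_t\cos\theta+M_t\sin\theta$ coincides on $C$ with the semimartingale $t\mapsto B_t\cos\theta+J\,1_{\{t\geq U\}}\sin\theta$, whose martingale part already has the correct bracket. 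Thus for $F=\int_{\mathcal P}f(A)\,dB_A\in\mathbb{D}$ one has on $C$
\[
F^\theta=\int_{\mathcal P}f(A)\,d\bigl(B\cos\theta+J\,1_{\{\cdot\geq U\}}\sin\theta\bigr)_A.
\]

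First I would invoke Proposition~\ref{derivee2}: since $F\in\mathbb{D}$, the map $\theta\mapsto F^\theta$ is differentiable in $L^2(\mathbb{P})$ with $\frac{d}{d\theta}F^\theta|_{\theta=0}=\int D_sF\,dM_s$. Conditioning this $L^2$ convergence on $C$ (which has positive probability $e^{-1}$ under $\mathbb{P}$, so conditioning preserves $L^2$-convergence up to the constant $e$) and writing out $\int D_sF\,dM_s$ on $C$ as the single jump contribution $J\,D_UF$ — here one uses that on $C$ the integral against $dM$ is just the jump at the unique jump time $U$, of size $J$ — gives
\[
\lim_{\theta\to0}\frac1\theta\Bigl(\int_{\mathcal P}f(A)\,d(B\cos\theta+J\,1_{\{\cdot\geq U\}}\sin\theta)_A-F\Bigr)=J\,D_UF\quad\text{in }L^2(C).
\]
Then, observing that replacing $\sin\theta$ by $\theta$ and $\cos\theta$ by $1$ changes nothing at first order (the bracket between $B\cos\theta+J1_{\{\cdot\geq U\}}\sin\theta$ and its rotated partner behaves as $\cos\theta\,dt\to dt$, and $\theta-\sin\theta=O(\theta^3)$), one gets the same limit for $R_t(\theta)$. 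Finally I would remove the nuisance factor $J$: since $J$ is $\pm1$ and independent of everything, $J\,D_UF$ and $D_UF$ have the same law, and more importantly one can simply absorb $J$ by symmetry — or, cleanest, note that in the statement $R_t(\theta)=B_t\cos\theta+1_{\{t\geq U\}}\sin\theta$ has no $J$, so one should condition instead on $C\cap\{J_1=1\}$, an event of probability $\tfrac12 e^{-1}$, on which $M$ restricted to $[0,1]$ is exactly $1_{\{\cdot\geq U\}}$; the $L^2(\mathbb{P})$-differentiability of Proposition~\ref{derivee2} survives this further conditioning.

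The main obstacle — and the only real gain over the first approach — is justifying that the convergence holds in $L^2(\mathbb{P})$ rather than merely in probability. In the first approach the $L^2$-bound is destroyed by the change of probability $\exp(-B_1\sin\theta-\tfrac12\sin^2\theta)$, whose $L^p$ norms blow up, forcing a descent to convergence in probability. Here no change of measure is needed, so one must instead check that conditioning on the fixed positive-probability event $C\cap\{J_1=1\}$ is an $L^2$-bounded operation: this is immediate because $\mathbf 1_{C\cap\{J_1=1\}}/\mathbb{P}(C\cap\{J_1=1\})$ is a bounded density, so $\|\cdot\|_{L^2(\mathbb{Q})}\leq (2e)^{1/2}\|\cdot\|_{L^2(\mathbb{P})}$ where $\mathbb{Q}$ is the conditioned law. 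Hence the $L^2(\mathbb{P})$ differentiability of $\theta\mapsto F^\theta$ transfers verbatim to $L^2(\mathbb{Q})$, and the $O(\theta^3)$ estimate $|\theta-\sin\theta|+|1-\cos\theta|\cdot(\text{$L^2$ norm of the relevant stochastic integral})\to0$ controlling the passage from $Y^\theta$ to $R(\theta)$ is a routine Itô-isometry computation on the finite-chaos approximants $F_n$, uniform in $\theta$, combined with the now-standard truncation argument $\|F-F_n\|_{\mathbb D}\to0$ from the proof of Proposition~\ref{derivee2}. Identifying the limit as $D_UF$ (no $J$) is then automatic on $\{J_1=1\}$.
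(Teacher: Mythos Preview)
Your argument is essentially the paper's: set up $M$ as the symmetric $\pm1$ jump L\'evy process, condition on the one-jump event $\{N_1=1\}$, apply Proposition~\ref{derivee2} to get $\frac{1}{\theta}(F^\theta-F)\to J_1\,D_{U_1}F$ in $L^2$, and then remove the random sign $J_1$. The only genuine difference is in this last step: you propose to condition further on $\{J_1=1\}$, on which $Y^\theta|_{[0,1]}$ coincides with $R(\theta)$ exactly and the limit is $D_{U_1}F$ with no sign; the paper instead keeps both values of $J_1$ and writes the algebraic relation
\[
\int_{\mathcal P}f(A)\,dR^1_A(\theta)=J_1\,F^\theta+(1-J_1)\cos\theta\int_{\mathcal P}f(A)\,dB_A,
\]
so that $J_1^2=1$ cancels the sign. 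Your variant is arguably the cleaner of the two.

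One remark: the step where you ``replace $\sin\theta$ by $\theta$ and $\cos\theta$ by $1$'' is superfluous. The process $R_t(\theta)$ in the statement is already $B_t\cos\theta+1_{\{t\ge U\}}\sin\theta$, and once you condition on $\{N_1=1,\,J_1=1\}$ you have $Y^\theta_t=R_t(\theta)$ on $[0,1]$ exactly, so no first-order comparison is needed.
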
 
  \begin{proof} We denote by $U_1$ the time of the first jump and by $\widetilde{\bbP}_2$ the conditional law $e{\bf 1}_{\{ N_1 =1\}} \bbP_2$. \\
We consider
  $${R}^1_t(\theta)=B_t\cos\theta+ 1_{\{t\geq U_1\}}\sin\theta\makebox{ and }\widetilde{R}^1_t(\theta)=B_t\cos\theta+J_1 1_{\{t\geq U_1\}}\sin\theta.$$
 The chaotic extension related to $Y_t (\theta)=B_t\cos\theta + M_t \sin\theta$ satisfies 
  $$ \forall \theta,\ F^\theta =\int_\mathcal{P}f(A)dY_A(\theta)=\int_\mathcal{P}f(A)d\widetilde{R}^1_A(\theta)\ \bbP_1 \times\widetilde{\bbP}_2 \makebox{ a.e.}$$
As a consequence of Proposition \ref{derivee2}, we have in the sense of $L^2 ( \bbP_1 \times\widetilde{\bbP}_2)$:
$$\lim_{\theta\rightarrow0}\frac{1}{\theta}(F^\theta-F)=\int D_s F \, dM_S =J_1 D_{U_1}F.$$
Then, we remark that
$$\int_\mathcal{P}f(A)d{R}^1_A(\theta) = J_1 F^\theta + (1-J_1) \cos\theta \int_\mathcal{P}f(A)dB_A,$$
and obtain
 $$\lim_{\theta\rightarrow0}\frac{1}{\theta}(\int_\mathcal{P}f(A)dR^1_A(\theta)-F)=D_{U_1}F\qquad\mbox{in}\  L^2 (\bbP_1 \times \widetilde{\bbP}_2 ).$$

  \end{proof}

\subsection{Another example}}

\noindent{\bf Remark 5}. When we enlarge the field of validity of the calculus, by using the equivalence relation  $\simeq$ instead of $\cong$ to functionals in  $L^0(\mathbb{P})$ depending on $\theta$ and differentiable  in probabibility at $\theta=0$, the authorized functional calculus becomes $C^1$ instead of $C^1\cap Lip$.

For  example let us consider a c\`adl\`ag  process $K$ independent of $B$.
And let us put $M(B)=\sup_{s\leq 1}(B_s+K_s)$. 
\begin{Pro} $M^\theta\cong M(B+\theta1_{\{.\geq U\}})\cong M(B\cos\theta+1_{\{.\geq U\}}\sin\theta)$.
\end{Pro}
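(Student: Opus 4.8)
The plan is to establish the three equivalences by a chain of comparisons, reducing everything to the already-proven Proposition~\ref{derivEDS} and Proposition~\ref{01} together with the observation that $M(\cdot)$ is a Lipschitz (but only $\mathcal{C}^1$ in the weaker sense) functional of the Brownian path, so the appropriate equivalence relation is $\simeq$ rather than $\cong$. First I would fix the setting: we take $M=\tilde{N}$ and condition on $\{N_1=1\}$, so that the single jump is placed at the uniform time $U$; working under the conditioned probability $\mathbb{Q}$, the chaotic extension $M^\theta$ restricted to $\{N_1=1\}$ is, by the discussion preceding Proposition~\ref{01}, realized as $\int_\mathcal{P} m(A)\,dV_A(\theta)$ with $V_t(\theta)=B_t\cos\theta+(1_{\{t\geq U\}}-t)\sin\theta$, where $m$ is the chaotic kernel of $M(B)=\sup_{s\leq 1}(B_s+K_s)$ (the independence of $K$ meaning all computations are done $\omega_K$-wise and then integrated).

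Next I would compute $\frac{d}{d\theta}M^\theta|_{\theta=0}$ in probability: by Proposition~\ref{derivee2}, $\frac{d}{d\theta}M^\theta|_{\theta=0}=\int_0^1 D_sM\,d\tilde{N}_s$, which on $\{N_1=1\}$ equals $D_UM-\int_0^1 D_sM\,ds$, exactly as in formula~(\ref{cond}). On the other side, $M(B+\theta 1_{\{.\geq U\}})$ is differentiable in probability at $\theta=0$ because $M$ is a Lipschitz functional of the uniform norm and $t\mapsto 1_{\{t\geq U\}}$, though not Cameron--Martin, acts by a bounded shift; the Lipschitz property plus the chain rule for Dirichlet structures (or a direct computation of the derivative of the running supremum) gives $\frac{d}{d\theta}M(B+\theta 1_{\{.\geq U\}})|_{\theta=0}=\langle DM, 1_{\{.\geq U\}}\rangle=\int_U^1 D_sM\,ds$... — here one must be careful: the correct pairing against the \emph{indicator} is $D_UM$ itself once one uses that the indicator's ``derivative'' is a Dirac mass at $U$. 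The cleanest route, and the one I would actually carry out, is to quote Proposition~\ref{01}: applying it with $f=m$ gives directly that $\frac{1}{\theta}(\int_\mathcal{P}m(A)\,dR_A(\theta)-M)\to D_UM$ in probability, where $R_t(\theta)=B_t\cos\theta+1_{\{t\geq U\}}\sin\theta$; and the absolutely-continuous change of measure argument there shows simultaneously that $M^\theta\cong$ (the $V$-version) is related to the $R$-version by removing the $-t\sin\theta$ drift at the cost of replacing $D_UM-\int_0^1 D_sM\,ds$ by $D_UM$. Then $M(B\cos\theta+1_{\{.\geq U\}}\sin\theta)\simeq M(B+\theta 1_{\{.\geq U\}})$ follows because $\cos\theta=1+O(\theta^2)$ and $\sin\theta=\theta+O(\theta^2)$ and $M$ is Lipschitz, so the two perturbations differ by $o(\theta)$ in the uniform norm; this handles the third equivalence.

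The remaining equivalence $M^\theta\simeq M(B+\theta 1_{\{.\geq U\}})$ is the substantive one, and the main obstacle: one cannot write $M^\theta=M(Y^\theta)$ since the chaotic extension is not compatible with composition (Remark~1). The plan is to route around this exactly as in the proof of Proposition~\ref{derivEDS}: approximate $M(B)=\sup_{s\leq1}(B_s+K_s)$ by functionals of finitely many Brownian increments, e.g. $M^{(n)}=\max_{1\leq k\leq n}(B_{k/n}+K_{k/n})$, for which $M^{(n)}$ is $\Phi_n(\int h_1\,dB,\ldots)$ with $\Phi_n\in\mathcal{C}^1\cap Lip$ (the max is Lipschitz; smooth it slightly if one insists on genuine $\mathcal{C}^1$, tracking the error in $\simeq$). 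By Proposition~\ref{comp} and Proposition~\ref{int} one gets $(M^{(n)})^\theta\cong \Phi_n$ evaluated at the $Y^\theta$-integrals, hence $\simeq M^{(n)}(B+\theta 1_{\{.\geq U\}})$ after the drift removal; then let $n\to\infty$, using that $M^{(n)}\to M$ in $\mathbb{D}$ (so the derivatives of the chaotic extensions converge in $L^2$, as in Proposition~\ref{derivEDS}) on the left, and that $M^{(n)}(B+\theta 1_{\{.\geq U\}})\to M(B+\theta 1_{\{.\geq U\}})$ with uniformly-in-$\theta$ control of the difference quotients (from the uniform Lipschitz constant) on the right. The passage to the limit in the $\simeq$ sense — i.e. matching both the value at $0$ and the in-probability derivative at $0$ — is where the care is needed, but it is the same limiting mechanism already used twice in the paper, so no new idea is required.
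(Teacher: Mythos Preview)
Your route is substantially more involved than the paper's, and the detour through discrete approximations is unnecessary for this particular functional. The paper's proof is a direct two-line computation: since
\[
M(B+\theta 1_{\{.\geq U\}})=\max\Bigl(\sup_{s<U}(B_s+K_s),\;\sup_{s\geq U}(B_s+\theta+K_s)\Bigr),
\]
one reads off immediately that the in-probability derivative at $\theta=0$ is the indicator $1_{\{\sup_{s\geq U}(B_s+K_s)\geq \sup_{s<U}(B_s+K_s)\}}=1_{\{\mathrm{argmax}\geq U\}}$, which is exactly $D_UM$ by the Nualart--Vives formula for the Malliavin derivative of a supremum. Since the derivative of $M^\theta$ at $\theta=0$ is also $D_UM$ (this is Proposition~\ref{01}, which you correctly invoke), the first equivalence is done; the second follows by the same splitting of the sup at $U$, the $\cos\theta$ factor contributing nothing to the derivative at $0$.

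Your own direct attempt at $\frac{d}{d\theta}M(B+\theta 1_{\{.\geq U\}})|_{\theta=0}$ goes astray precisely at the point where you write $\langle DM, 1_{\{.\geq U\}}\rangle=\int_U^1 D_sM\,ds$: this would be the answer if $1_{\{.\geq U\}}$ were a Cameron--Martin direction, but it is not, and treating it as such is exactly the pitfall flagged in the introduction. Abandoning the direct computation at that stage and falling back on a Picard-style approximation via $M^{(n)}=\max_k(B_{k/n}+K_{k/n})$ and Proposition~\ref{comp} is overkill; it also carries genuine technical burdens you wave at but do not discharge (the max is Lipschitz but not $\mathcal{C}^1$, so Proposition~\ref{comp} does not apply as stated; smoothing and then passing to the limit in $\mathbb{D}$ requires control you have not established). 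The paper's point is that for \emph{this} functional the sup structure makes the derivative explicit, so no approximation machinery is needed.
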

\begin{proof} For the first equivalence, the value at  $\theta=0$ is of course correct, and about the derivative we have  
$$\begin{array}{rl}
M(B\;+&\!\!\!\theta1_{\{.\geq U\}})\\
&=\sup_{s\leq 1}((B_s+K_s)1_{\{s<U\}}+(B_s+\theta+K_s)1_{\{s\geq U\}})\\
&=\max(\sup_{s<U}(B_s+K_s),\sup_{s\geq U}(B_s+\theta+K_s))\\
\end{array}
$$
 Thus we have the convergence in probability
$$\lim_{\theta\rightarrow0}\frac{1}{\theta}(M(B+\theta1_{\{.\geq U\}})-M(B))=1_{\{\sup_{s\geq U}(B_s+K_s)\geq\sup_{s<U}(B_s+K_s)\}}
$$
This result is correct  cf \cite{nualart-vives2} for the  gradient of $M$. The second equivalence is similar what shows the proposition. This implies that  $M$ possesses a density since $\Gamma[M]=0$ cannot be true except if  $\sup(B_s+K_s)<K_0$ what is impossible since  $K$ is independent of $B$.
\end{proof}

%

\end{document}